\documentclass[11pt]{amsart}
\usepackage{amsmath,amscd,amssymb,amsfonts,amsthm}
\usepackage{tikz-cd} 
\usepackage{tikz}
\usepackage{comment}
\usepackage{yhmath}
\usepackage[color,cmtip,matrix,arrow]{xy}

\newtheorem{theorem}{Theorem}[section]
\newtheorem{corollary}[theorem]{Corollary}
\newtheorem{proposition}[theorem]{Proposition}

\newtheorem{lemma}[theorem]{Lemma}
\theoremstyle{definition}
\newtheorem{definition}[theorem]{Definition}
\theoremstyle{remark}

\newtheorem{remark}[theorem]{Remark}

\newtheorem{example}[theorem]{Example}

\newcommand{\SP}[1]{{\left\langle {{#1}} \right\rangle}}

\newcommand{\fa}{\mathfrak{a}}

\newcommand{\T}{\mathbb{T}}

\newcommand{\ca}{\mathcal}

\newcommand{\E}{\ca{E}}

\newcommand{\R}{\mathbb{R}}

\newcommand\lie[1]{\mathfrak{#1}}

\newcommand{\g}{\lie{g}}

\renewcommand{\a}{\mathsf{a}}

\newcommand{\on}{\operatorname}
\newcommand{\Hom}{ \on{Hom}}

\renewcommand{\subset}{\subseteq}

\newcommand\qu{/\kern-.7ex/} 

\newcommand\lam{\lambda}

\newcommand{\End}{\on{End}}

\begin{document}
	\sloppy
	\title{Local decomposition and linearization of Loday Brackets}
    \author{Hudson Lima}
    \address{UFAM}
    \email{hudsonlima@ufam.edu.br}

\begin{abstract} 
    We study local splitting-type results for general Loday algebroids and use them to obtain a direct proof of the splitting theorem for Courant algebroids. We also discuss the linearization problem and establish a general linearization principle using Euler-like derivations.
\end{abstract}
 
	\maketitle
	\tableofcontents
	
\section{Introduction}
    Lie algebras and Poisson structures are central objects in various areas of mathematics and physics. In differential geometry, a key role is played by geometric versions of Lie algebras known as {\em Lie algebroids}, introduced by Pradines \cite{Pradines} as infinitesimal counterparts of Lie groupoids. Lie algebroids arise in a wide array of contexts and have been the subject of much study; see e.g. \cite{WC99,CFM21,CrMo08,Du01,RLF,Mackenzie05}.

    This paper concerns the study of more general brackets, possibly non-skew-symmetric, known as Loday brackets (after \cite{Loday93}) as defined in \cite{Gra11}. A prominent class of examples is given by {\em Courant algebroids} \cite{LWX97,RoyThesis}, which play a central role in the theory of Dirac and generalized complex structures \cite{Cour90,CoWe88,Gua04}, and have numerous connections with mathematical physics \cite{StrAlek05,RoySigma,Yam07,Zabzine06}. Beyond Courant algebroids, there exist numerous additional examples; see, for instance, \cite{Bar12,CCU20,Gra11,IBMP01,SW08} and the references therein.

    Given a bracket on the space of sections of a vector bundle, one can formalize the notion of local splitting and ask questions about linearization in the same spirit as Weinstein's for Poisson structures \cite{We83}. The case of Lie algebroids is particularly well studied. The {\em Splitting Theorem for Lie algebroids} states that every Lie algebroid is locally the product of a tangent bundle Lie algebroid and a Lie algebroid whose anchor vanishes at the reference point \cite{Dufour2001NormalFF,Du01,RLF,We00}. The natural smooth linearization questions for Lie algebroids have been addressed by Monnier and Zung in \cite{MZ04}. 

    In the case of Courant algebroids, there is an analogous splitting theorem: every Courant algebroid is locally a product of a generalized tangent bundle and a singular Courant algebroid. This result appears in \cite{BBLM}, where the authors construct a general method for obtaining normal forms around transverse submanifolds, which yield splitting theorems in the local case.

    For brackets beyond the Lie algebroid setting, very few linearization results are known, and the problem remains largely open (see also \cite{FeMo04}). The linear part of a singular Courant algebroid can be described as an infinitesimal action of a quadratic Lie algebra with coisotropic isotropies; see \cite{BBLM,MLi}. However, no general linearization results, analogous to those of \cite{MZ04}, are currently available.

    In this paper we study local conditions under which splitting-like theorems hold in the context of almost Loday brackets, with particular focus on Courant brackets. To this end, we establish a link between the tensoriality of the {\em Jacobiator} (its linearity over smooth functions) and the invariance of the anchor and co-anchor under flows generated by sections of the algebroid. We then clarify the notion of direct product of Loday algebroids and introduce the concept of a {\em direct decomposition}. 
    Next, we derive conditions ensuring the existence of local {\em matching decompositions} in which one of the factors is the tangent bundle $TI$ of an open interval $I\subset\R$. As a consequence, we obtain a direct proof of the splitting theorem for Courant algebroids, in the spirit of the original approaches in \cite{Du01,RLF,We00}. We also describe the linear part of singular Loday algebroids and formulate a {\em Linearization Principle} in terms of Euler-like derivations. Finally, we give a simple condition on the representation of the singular Lie algebra that guarantees linearization.

    The paper is organized as follows. In Section~\ref{sec:loday} we review basic properties of Loday algebroids. In Section~\ref{sec:Matching} we define and discuss the types of local product decompositions that will be used throughout the paper. In Section~\ref{sec:localLemmas} we prove the technical lemmas that guarantee the existence of local product decompositions in which one of the factors is $TI$. Section~\ref{sec:Courant} applies these ideas to Courant algebroids and gives a direct proof of the splitting theorem in this case. In Section~\ref{sec:linearization} we study the linearization problem around singular points and establish a linearization principle using Euler-like derivations. Finally, we discuss applications and examples in the Courant algebroid setting.

\section{Loday algebroids} \label{sec:loday}
 An {\em anchored vector bundle} consists of a vector bundle $E\to M$ with a bundle map $\rho\colon E\to TM$ covering the identity on $M$, called {\em anchor}. 
 
 An {\em almost Loday algebroid} (see \cite{Gra11}) over an anchored vector bundle $(E,\rho)$ is a $\R$-bilinear map 
 $$
    \mathfrak{a}\colon \Gamma(E)\times\Gamma(E)\to\Gamma(E)
 $$
 for which there exists a bundle map $\lambda:T^*M\otimes E\otimes E\to E$ satisfying the following identities
	\begin{eqnarray}
		\mathfrak{a}(\alpha,f\beta) &=& f\mathfrak{a}(\alpha,\beta) + (\rho(\alpha)f)\beta \label{eq: leib-anchor}\label{eq:LeibnizRule} \\
		\mathfrak{a}(f \alpha,\beta) &=& f \mathfrak{a}(\alpha,\beta) - (\rho(\beta)f)\alpha + \lambda(df\otimes\alpha\otimes\beta) \label{eq:co-anchorRule}	
	\end{eqnarray}	
	for all $\alpha,\beta \in \Gamma(E)$ and $f\in C^\infty(M)$.

    Since the bracket $\fa$ determines the anchor $\rho$ and co-anchor $\lambda$ uniquely, we will use as convenience  the following notations
    $$
        (E,\fa), \ \ (E,\fa,\rho),\ \ \mbox{or}\ \ (E,\fa,\rho,\lambda)
    $$
	to denote the corresponding almost Loday algebroid.
    
	The {\em Jacobiator} of $\fa$ is the map
	\begin{equation}\label{eq:jac}
		J_\mathfrak{a}(\alpha,\beta,\gamma) := \mathfrak{a}(\mathfrak{a}(\alpha,\beta),\gamma) + \mathfrak{a}(\beta,\mathfrak{a}(\alpha,\gamma)) - \mathfrak{a}(\alpha,\mathfrak{a}(\beta,\gamma)).
	\end{equation}
	If $J_\fa=0$, we say that $(E,\mathfrak{a},\rho)$ is a {\em Loday algebroid}. 
	
	Throughout the paper we will occasionally use the lighter notation
	$$
        [\alpha, \ \beta]_{\fa}=\mathfrak{a}(\alpha, \ \beta)
    $$

    The Jacobiator \eqref{eq:jac} may be interpreted as the Lie derivative of $\fa$ with respect to itself.

    Let $Der(E)$ denote the Lie algebra of derivations of the $C^\infty(M)$-module $\Gamma(E)$. Its elements are pairs $(D,X)$ where $X\in\mathfrak{X}(M)$ and $D\colon\Gamma(E)\to\Gamma(E)$ is an $\R$-linear map satisfying
    $$
        D(f\alpha) = fD(\alpha)+(Xf)\alpha.
    $$
    This condition uniquely determines $X$ called the {\it symbol} of $D$. The Lie bracket on $Der(E)$ is the commutator 
    $$
        \big[D_1, \ D_2\big]_{der} := D_1\circ D_2 - D_2\circ D_1.
    $$
    whose symbol is the Lie bracket of vector fields $[X_1,X_2]$.

Let $\mathfrak{X}_{lin}(E)$ denote the {\em Lie algebra of linear vector fields} on $E$. Each $\widehat{X}\in \mathfrak{X}_{lin}(E)$ defines a derivation 
$$
    D_{\widehat{X}}(\alpha) = \frac{d}{dt}\Big|_{t=0} \big(\varphi_t^{\widehat{X}}\big)^*\alpha,\quad \alpha\in\Gamma(E)
$$
where $\varphi_t^{\widehat{X}}$ denotes the local flow of $\widehat{X}$.

The correspondence $\widehat{X}\mapsto D_{\widehat{X}}$ is a Lie algebra isomorphism
$$
    \mathfrak{X}_{lin}(E)\cong Der(E).
$$

Let $(E,\fa,\rho)$ be an almost Loday algebroid. By (\ref{eq: leib-anchor}), for each $\alpha\in\Gamma(E)$ the map 
$$
    \beta\mapsto [\alpha,\beta]_\fa
$$ 
is a derivation, for every $\alpha \in \Gamma(E)$. Hence the bracket defines a map (also denoted by $\fa$)
$$
    \fa\colon\Gamma(E)\to Der(E)
$$
whose symbol of any $\fa(\alpha)\in Der(E)$ is $\rho(\alpha)$. From this point of view the Jacobiator may be written as 
\begin{equation}\label{eq:jacDerivation}
	J_\fa(\alpha,\beta,-) = \fa\big([\alpha,\beta]_\fa\big) - \big[\fa(\alpha),\fa(\beta)\big]_{der} \ ,
\end{equation}
a derivation with symbol $\rho\big(\!\! \ [\alpha,\beta]_\fa\big)-[\rho(\alpha),\rho(\beta)]$. Idendifying $\fa(\alpha)$ with the corresponding linear vector field on $E$,
\begin{equation}
    J_\fa(\alpha,\beta,\gamma) = \mathcal{L}_{\fa(\alpha)}(\fa)(\beta,\gamma).
\end{equation}
Thus the Jacobi identity means that the flow of each derivation $\fa(\alpha)$ preserves the bracket.

We say that a Loday bracket $\fa$ preserves a tensor on $E$ if the flow of each derivation $\fa(\alpha)$ preserves it.

In particular, the Jacobi identity implies that $\fa$ preserves its anchor and co-anchor. For instance
$$
    \rho([\alpha,\beta]_\fa)=[\rho(\alpha),\rho(\beta)].
$$ 

In fact, this property follows from the tensoriality of $J_\fa$, as the next proposition shows.

\begin{proposition}\label{prop:tensorialLoday}
	Let $\fa$ be a Loday bracket on $E\to M$ with anchor $\rho\colon E\to TM$ and co-anchor $\lambda\colon T^*M\otimes E\otimes E\to E$. Let  
    $$
        S_\fa(\alpha,\beta):= [\alpha,\beta]_\fa+[\beta,\alpha]_\fa
    $$ 
    denote its symmetrization.
    
    The Jacobiator $J_\fa$ is $C^\infty(M)$-linear in each argument if and only if:
	\begin{enumerate}
		\item 
		$$
            \rho([\alpha,\beta]_\fa)=[\rho(\alpha),\rho(\beta)];
        $$
		\item the bracket preserves the co-anchor $\lambda$:
        $$
    		\begin{aligned}
                [\alpha,\lambda(\xi\otimes\beta\otimes\gamma)]_\fa
                &=\lambda(\mathcal{L}_{\rho(\alpha)}\xi\otimes\beta\otimes\gamma)\\
                &\quad+\lambda(\xi\otimes[\alpha,\beta]_\fa\otimes\gamma)\\
                &\quad+\lambda(\xi\otimes\beta\otimes[\alpha,\gamma]_\fa);
            \end{aligned}
        $$
		\item 
		$$\fa(\lambda(df\otimes\alpha\otimes\beta),\gamma)=\rho(\gamma)fS_\fa(\alpha,\beta)-\lambda(df\otimes S_\fa(\alpha,\beta)\otimes\gamma).$$
	\end{enumerate}
\end{proposition}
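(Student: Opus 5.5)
Throughout, "Loday bracket" is read as "almost Loday bracket": if $J_\fa=0$ it is trivially tensorial, so the content of the statement is that tensoriality of $J_\fa$ is equivalent to (1)--(3). The plan is to compute, for each of the three slots separately, the failure of $C^\infty(M)$-linearity of $J_\fa$. We expand $J_\fa(\alpha,\beta,f\gamma)-fJ_\fa(\alpha,\beta,\gamma)$, and likewise with $f$ in the middle slot and in the first slot, using only the Leibniz rule \eqref{eq:LeibnizRule} and the co-anchor rule \eqref{eq:co-anchorRule}. We then identify each defect with one of the conditions (1)--(3).

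\emph{Third slot.} By \eqref{eq:jacDerivation}, $J_\fa(\alpha,\beta,-)$ is a derivation with symbol $\rho([\alpha,\beta]_\fa)-[\rho(\alpha),\rho(\beta)]$. Hence
\[
J_\fa(\alpha,\beta,f\gamma)-fJ_\fa(\alpha,\beta,\gamma)=\big(\rho([\alpha,\beta]_\fa)f-[\rho(\alpha),\rho(\beta)]f\big)\gamma .
\]
This vanishes for all $f$ and $\gamma$ if and only if (1) holds, at least where $E\neq 0$. So linearity in the third slot is equivalent to (1), and from now on we assume (1).

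\emph{Second slot.} We compute $J_\fa(\alpha,f\beta,\gamma)$ term by term. The term $\fa(\fa(\alpha,f\beta),\gamma)$ uses Leibniz in the second argument of the inner bracket, then the co-anchor rule for $\fa((\rho(\alpha)f)\beta,\gamma)$. The term $\fa(f\beta,\fa(\alpha,\gamma))$ uses the co-anchor rule. The term $\fa(\alpha,\fa(f\beta,\gamma))$ uses the co-anchor rule, then Leibniz, and produces $\fa(\alpha,\lambda(df\otimes\beta\otimes\gamma))$. After (1) is used to cancel the anchor terms $\rho(\gamma)\rho(\alpha)f$ against $\rho(\alpha)\rho(\gamma)f$ and $\rho(\fa(\alpha,\gamma))f$, the remaining defect should be
\[
\lambda(d(\rho(\alpha)f)\otimes\beta\otimes\gamma)+\lambda(df\otimes\beta\otimes[\alpha,\gamma]_\fa)+\lambda(df\otimes[\alpha,\beta]_\fa\otimes\gamma)-[\alpha,\lambda(df\otimes\beta\otimes\gamma)]_\fa .
\]
Since $d(\rho(\alpha)f)=\mathcal L_{\rho(\alpha)}df$, this is exactly the failure of (2) for $\xi=df$. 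Both sides of (2) are $C^\infty(M)$-linear in $\xi$ once (1) holds: the extra terms $(\rho(\alpha)g)\lambda(\cdots)$ arising from $\xi\mapsto g\xi$ appear on both sides. Locally $1$-forms are $C^\infty$-combinations of exact ones, so vanishing on all $df$ is equivalent to (2).

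\emph{First slot.} This is the step I expect to be the main obstacle, since it generates the most terms: the co-anchor rule is applied inside both outer brackets, and we get $\fa(\lambda(df\otimes\alpha\otimes\beta),\gamma)$ together with $\lambda$ terms involving $\fa(\alpha,\gamma)$ and $\fa(\beta,\gamma)$. The plan is to expand $J_\fa(f\alpha,\beta,\gamma)$ and cancel anchor terms with (1). We then use (2), applied with $\xi=df$ and bracket argument $\beta$ acting on $\lambda(df\otimes\alpha\otimes\gamma)$, to rewrite the term $\fa(\beta,\lambda(df\otimes\alpha\otimes\gamma))$. Terms such as $(\rho(\beta)f)\fa(\alpha,\gamma)$ and $(\rho(\gamma)f)\fa(\alpha,\beta)$ should combine into $(\rho(\gamma)f)S_\fa(\alpha,\beta)$. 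The $\lambda$-terms should combine into $\lambda(df\otimes S_\fa(\alpha,\beta)\otimes\gamma)$, using the identity $\fa(\beta,\alpha)+\fa(\alpha,\beta)=S_\fa$. The residual defect is then $\fa(\lambda(df\otimes\alpha\otimes\beta),\gamma)-\rho(\gamma)f\,S_\fa(\alpha,\beta)+\lambda(df\otimes S_\fa(\alpha,\beta)\otimes\gamma)$, whose vanishing is (3). The careful bookkeeping of signs in this expansion is where errors are likely, so I would organize the terms by type (anchor-derivative terms, $\lambda$-terms, brackets with $\lambda$) before cancelling.

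Putting the three computations together gives the equivalence. Tensoriality in all slots holds if and only if the three defects vanish, that is, if and only if (1), (2) and (3) hold; here each later defect is computed assuming the earlier conditions.
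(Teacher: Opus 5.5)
Your proposal is correct, and the defect formulas you predict match the paper's. In the first slot, after substituting (2) for $[\beta,\lambda(df\otimes\alpha\otimes\gamma)]_\fa$, the following happens:
\begin{itemize}
\item the $d(\rho(\beta)f)$-terms cancel, as do the $\lambda(df\otimes\alpha\otimes[\beta,\gamma]_\fa)$-terms;
\item the $(\rho(\beta)f)[\alpha,\gamma]_\fa$-terms cancel among themselves;
\item $S_\fa(\alpha,\beta)$ comes from $-(\rho(\gamma)f)\bigl([\alpha,\beta]_\fa+[\beta,\alpha]_\fa\bigr)$ and from $\lambda(df\otimes[\alpha,\beta]_\fa\otimes\gamma)+\lambda(df\otimes[\beta,\alpha]_\fa\otimes\gamma)$.
\end{itemize}

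Your overall structure is the paper's: the third slot gives (1), then the second slot gives (2), then the first slot gives (3), each step assuming the previous ones. The difference is how the defects are computed. Rather than expanding $J_\fa$ directly, the paper uses the symmetrization identities
\[
J_\fa(\alpha,\beta,\gamma)+J_\fa(\alpha,\gamma,\beta)=S_\fa([\alpha,\beta]_\fa,\gamma)+S_\fa(\beta,[\alpha,\gamma]_\fa)-[\alpha,S_\fa(\beta,\gamma)]_\fa
\]
and $J_\fa(\alpha,\beta,-)+J_\fa(\beta,\alpha,-)=\fa(S_\fa(\alpha,\beta))$, together with the linearity already proved in the later slot. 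This turns the second-slot defect into the defect of an expression built from $S_\fa$. It turns the first-slot defect into $[S_\fa(f\alpha,\beta),\gamma]_\fa-f[S_\fa(\alpha,\beta),\gamma]_\fa$, which is immediate from $S_\fa(f\alpha,\beta)=fS_\fa(\alpha,\beta)+\lambda(df\otimes\alpha\otimes\beta)$.

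The paper's trick removes exactly the bookkeeping you flagged as the main obstacle; there, (2) enters the first-slot step only implicitly, through second-slot linearity. Your direct expansion is longer but needs no auxiliary identities and shows where (2) is actually used. Your remark that both sides of (2) are $C^\infty(M)$-linear in $\xi$, so that checking $\xi=df$ suffices, supplies a step the paper leaves unstated.
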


\begin{remark}
    The tensor $\lam$ is related with $S_\fa$ by
	\begin{equation}
		S_\fa(f\alpha,\beta) = fS_\fa(\alpha,\beta) + \lambda(df\otimes\alpha\otimes\beta),
	\end{equation}
	In particular, if $\rho([\alpha,\beta]_\fa)=[\rho(\alpha),\rho(\beta)]$, then 
    \begin{equation}
        \rho(S_\fa(\alpha,\beta)) = 0, \quad \rho\circ\lambda\equiv 0.
    \end{equation}
\end{remark}

\begin{proof}[Proof of Proposition \ref{prop:tensorialLoday}]
	Item (a) follows from Equation (\ref{eq:jacDerivation}) and from the fact that a derivation is $C^\infty(M)$-linear if and only if its symbol vanishes.
	
	(b) Assume (a). Consider the symmetrization
	
	\begin{equation}
	J_\fa(\alpha,\beta,\gamma)+J_\fa(\alpha,\gamma,\beta)=S_\fa([\alpha,\beta]_\fa,\gamma)+S_\fa(\beta,[\alpha,\gamma]_\fa)-[\alpha,S_\fa(\beta,\gamma)]_\fa,
	\end{equation}
	for all $\alpha,\beta,\gamma\in\Gamma(E)$.
	
	Define 
    $$
        [f,J_\fa](\alpha,\beta,\gamma)=J_\fa(\alpha,f\beta,\gamma)-fJ_\fa(\alpha,\beta,\gamma)
    $$
	and use that 
    $$
        J_\fa(\alpha,\beta,f\gamma)=fJ_\fa(\alpha,\beta,\gamma).
    $$ 

    A direct computation gives
    $$
    \begin{aligned}
        [f,J_\fa](\alpha,\beta,\gamma) &= S_\fa([\alpha,f\beta]_\fa,\gamma)+S_\fa(f\beta,[\alpha,\gamma]_\fa)  - [\alpha,S_\fa(f\beta,\gamma)]_\fa\\
        &\quad -fS_\fa([\alpha,\beta]_\fa,\gamma)-fS_\fa(\beta,[\alpha,\gamma]_\fa)+f[\alpha,S_\fa(\beta,\gamma)]_\fa \\
        &= S_\fa(f[\alpha,\beta]_\fa+\rho(\alpha)f\beta,\gamma)+\lambda(df\otimes\beta\otimes [\alpha,\gamma]_\fa)-[\alpha,fS_\fa(\beta,\gamma)]_\fa \\
		&\quad -[\alpha,\lambda(df\otimes\beta\otimes\gamma)]_\fa-fS_\fa([\alpha,\beta]_\fa,\gamma)+f[\alpha,S_\fa(\beta,\gamma)]_\fa \\
        &= \lambda(df\otimes [\alpha,\beta]_\fa\otimes\gamma)+S_\fa(\rho(\alpha)f\beta,\gamma)+\lambda(df\otimes\beta\otimes [\alpha,\gamma]_\fa) \\
		&\quad -\rho(\alpha)fS_\fa(\beta,\gamma)-[\alpha,\lambda(df\otimes\beta\otimes\gamma)]_\fa.
    \end{aligned}
    $$
    This expression can be rewritten as 
    $$
        \begin{aligned}
            [f,J_\fa](\alpha,\beta,\gamma) & = \lambda(df\otimes [\alpha,\beta]_\fa\otimes\gamma)+\lambda(df\otimes\beta\otimes [\alpha,\gamma]_\fa) \\
            &\quad +\lambda(d(\rho(\alpha)f)\otimes\beta\otimes\gamma) -[\alpha,\lambda(df\otimes\beta\otimes\gamma)]_\fa,
        \end{aligned}
    $$
    which proves item (b).


Using Equation \eqref{eq:jacDerivation} again we obtain
\begin{equation}
J_\fa(\alpha,\beta)+J_\fa(\beta,\alpha)=\fa(S_\fa(\alpha,\beta)),
\end{equation}
for all $\alpha,\beta\in\Gamma(E)$.	

Let 
$$
    [f,J_\fa](\alpha,\beta,\gamma):=J_\fa(f\alpha,\beta,\gamma)-fJ_\fa(\alpha,\beta,\gamma).
$$
Then
$$
\begin{aligned}
    [f,J_\fa](\alpha,\beta,\gamma) &= [S_\fa(f\alpha,\beta),\gamma]_\fa-f[S_\fa(\alpha,\beta),\gamma]_\fa \\
	&= [\lambda(df\otimes\alpha\otimes\beta)+fS_\fa(\alpha,\beta),\gamma]_\fa-f[S_\fa(\alpha,\beta),\gamma]_\fa \\
	&= [\lambda(df\otimes\alpha\otimes\beta),\gamma]_\fa-\rho(\gamma)fS_\fa(\alpha,\beta)+\lambda(df\otimes S_\fa(\alpha,\beta)\otimes\gamma),
\end{aligned}
$$
which yields the identity in (c) and concludes the proof.

\end{proof}

A \emph{Lie algebroid} is a Loday algebroid $(E,\fa,\rho)$ for which the bracket $\fa\colon\Gamma(E)\times\Gamma(E)\to\Gamma(E)$ is skew-symmetric, i.e., the symmetrization $S_\fa$ vanishes. In particular, the co-anchor of a Lie algebroid also vanishes.

There are many examples of Lie algebroids in the literature (e.g. \cite{WC99,Mackenzie05}). We list some relevant ones:
\begin{enumerate}
	\item A Lie algebra $(\g,[\cdot,\cdot])$, viewed as a Lie algebroid over a point $M=\{*\}$.
	\item An involutive distribution $D\subseteq TM$ equipped with the Lie bracket of vector fields.
	\item The cotangent bundle of a Poisson manifold $(M,\pi)$, with bracket
	$$[\alpha,\beta]_{\pi}=\mathcal{L}_{\pi^\sharp(\alpha)}\beta-\mathcal{L}_{\pi^\sharp(\beta)}\alpha-d\pi(\alpha,\beta),$$
	on $\Omega^1(M)$ and anchor $\pi^\sharp:T^*M\to TM$, $\alpha\mapsto \pi(\alpha,-)$.
\end{enumerate}

A \emph{Courant algebroid} (cf.~\cite{LWX97}) is a vector bundle 
$(E,\SP{\cdot,\cdot})$ endowed with a fiberwise nondegenerate symmetric pairing 
(possibly of indefinite signature), together with a Loday algebroid structure $(E,\fa,\rho)$ such that
\begin{enumerate}
	\item the bracket preserves the pairing:
	\begin{equation*}
	\rho(\alpha)\SP{\beta,\gamma}=\SP{[\alpha,\beta]_\a,\gamma}+\SP{\beta,[\alpha,\gamma]_\a}.
	\end{equation*}
	\item the symmetrization of the bracket is given by
	\begin{equation*}
	S_\fa(\alpha,\beta)=\rho^*d\SP{\alpha,\beta},
	\end{equation*}
	for $\alpha,\beta\in\Gamma(E)$, where
    $$\rho^*:T^*M\to E^*\cong E$$ 
    denotes the transpose of the anchor and the identification $E^*\cong E$ is induced by the pairing $\SP{\cdot,\cdot}$.
\end{enumerate}

Comparing with Definition~2.6.1 in \cite{RoyThesis}, Axiom~1 corresponds to the Jacobi identity and Axiom~3 to the Leibniz rule \eqref{eq:LeibnizRule}, while Axioms~4 and~5 correspond to items~(a) and~(b) of the above definition. Moreover, Axiom~2 follows from the remaining axioms and corresponds to item~(a) of Proposition~\ref{prop:tensorialLoday}; see also \cite{Uchino}.


For Courant algebroids the co-anchor map 
$$
    \lambda\colon T^*M\otimes E\otimes E\to E
$$ is given by 
\begin{equation}
	\lambda(\xi\otimes\alpha\otimes\beta) = \SP{\alpha,\beta}\rho^*\xi,
\end{equation}
for all sections $\alpha, \ \beta \in \Gamma(E)$ and $1$-form $\xi\in\Omega^1(M)$.

Important examples of Courant algebroids include:
\begin{itemize}
	\item A Courant algebroid over a point ($M=\{*\}$) is the same as a Lie algebra equipped with an $\mathrm{ad}$-invariant nondegenerate bilinear form, i.e., a \emph{quadratic Lie algebra}.
	\item Given manifold $M$, the {\em standard Courant algebroid} on $M$ is the Courant algebroid structure on the \emph{generalized tangent bundle} 
    \[
        \T M:= TM\oplus T^*M,
    \]  
    defined by
	\begin{enumerate}
		\item the canonical symmetric pairing: $\SP{X+\alpha,Y+\beta}:=i_X\beta+i_Y\alpha$
		\item the {\em Courant-Dorfman bracket}:
		\begin{equation}\label{eq:CourantDorfman}
		[X+\alpha,Y+\beta]:=[X,Y]+\mathcal{L}_X\beta-i_Yd\alpha,
		\end{equation}
		for every $\alpha,\beta\in\Omega^1(M)$ and $X,Y\in\mathfrak{X}(M)$.
		\item the canonical projection (as anchor): $\pi_{TM}\colon\T M\to TM$.
	\end{enumerate}
	\item One can also consider the {\em twisted Courant-Dorfman} bracket, obtained by adding $i_Xi_Y\eta$ in Eq. \eqref{eq:CourantDorfman}, where $\eta\in\Omega^3(M)$. The resulting bracket defines an almost Courant algebroid structure on $\T M$, which satisfies the Jacobi equation if and only if $\eta$ is closed (see e.g., \cite{SevWeinstein02}).
\end{itemize}


\section{Matching and Direct Decompositions}\label{sec:Matching}

A local splitting theorem describes the local structure of a Loday bracket around a point by decomposing it as a “vector bundle product” of two brackets: one transitive and the other singular. The quotation marks in ``vector bundle product'' reflect the fact that, in principle, there are several ways to extend two brackets to the product of their underlying vector bundles obtaining a Loday bracket. 

Given two smooth vector bundles $E_1\to M_1$ and $E_2\to M_2$, their product defines a smooth vector bundle $E_1\times E_2\to M_1\times M_2$. In terms of the canonical projections $\pi_1\colon M_1\times M_2\to M_1$ and $\pi_2\colon M_1\times M_2\to M_2$, this product is isomorphic to the Whitney sum 
$$	
    E_1\times E_2 \stackrel{\sim}{\longrightarrow}\pi_1^*(E_1)\oplus\pi_2^*(E_2).
$$

The natural inclusions $\Gamma(E_1)\to \Gamma(E_1\times E_2)$ and $\Gamma(E_2)\to\Gamma(E_1\times E_2)$ are given explicitly by
$$\hat{\sigma}_1(p,q) = (\sigma_1(p),0_q) \ \mbox{and} \  \hat{\sigma}_2(p,q) = (0_p, \sigma_2(q)), $$
for every $\sigma_1\in\Gamma(E_1)$ and $\sigma_2\in\Gamma(E_2)$. 

Since $E_1\times E_2\cong \pi_1^*(E_1)\oplus \pi_2^*(E_2)$, we obtain
$$
\Gamma(E_1\times E_2) \cong \Gamma(\pi_1^*E_1)\oplus\Gamma(\pi_2^*E_2).
$$
Recall that for any smooth map $\pi\colon N\to M$ 
$$
\Gamma(\pi^*E)\cong C^\infty(N)\otimes_{C^\infty(M)}\Gamma(E).
$$

If $M_2$ is contractible, any vector bundle $E\to M_1\times M_2$ is 
isomorphic to $\pi_1^*(E|_{M_1\times\{q\}})$ for any $q\in M_2$. 
Indeed, the projection $\pi_1\colon M_1\times M_2\to M_1$ is a homotopy 
equivalence with homotopy inverse $i_q(p)=(p,q)$.

Suppose now both $M_1$ and $M_2$ are contractible, and let $E\to M_1\times M_2$ be a vector bundle. Assume that $E=E_1\oplus E_2$. Then any choice of frame for $E_1\to M_1\times M_2$ and $E_2\to M_1\times M_2$ induces an isomorphism
\begin{equation} \label{iso: splits}
    E\cong E_1\big|_{M_1\times\{q\}} \times E_2\big|_{\{p\}\times M_2},
\end{equation}
for every $p\in M_1$ and $q\in M_2$.

The key idea in constructing local splittings is to obtain a local decomposition of $E=E_1\oplus E_2$ where it is possible to find frames making the isomorphism (\ref{iso: splits}) into the desired splitting. 

We will use the following terminology to describe the different types of decompositions that will appear in the next section.

\begin{definition}\label{def:matching}
	Let $\fa$ be an almost Loday algebroid structure on a product vector bundle $E_1 \times E_2$. We say that $\fa$ is a:
	\begin{enumerate}
		\item \emph{Semi-matching (for the factor $E_1$)} if the bracket restricts to $\Gamma(E_1)$, that is,
		\[
		[\Gamma(E_1),\Gamma(E_1)]_{\fa}\subseteq \Gamma(E_1).
		\]
		
		\item \emph{Matching} if the bracket restricts to both $\Gamma(E_1)$ and $\Gamma(E_2)$, i.e.,
		\[
		[\Gamma(E_i),\Gamma(E_i)]_{\fa}\subseteq \Gamma(E_i), \qquad i=1,2.
		\]
		
		\item \emph{Semi-direct decomposition} if, in addition to being a matching, we have
		\[
		[\Gamma(E_1),\Gamma(E_2)]_{\fa}=\{0\}.
		\]
		
		\item \emph{Direct decomposition} if, in addition to being a semi-direct decomposition, we have
		\[
		[\Gamma(E_2),\Gamma(E_1)]_{\fa} = \{0\}.
		\]
	\end{enumerate}
\end{definition}

 Due to the generality of Definition \ref{def:matching}, the Loday algebroid structures in $\Gamma(E_1)$ and $\Gamma(E_2)$ do not determine a unique direct product structure on $\Gamma(E_1\times E_2)$.
 
 A natural question is therefore the following: given two Loday algebroids $(E_1,\fa_1)$ and $(E_2,\fa_2)$, which direct decompositions $(E_1\times E_2,\fa)$ restrict to the original Loday brackets on each factor?

\begin{proposition}
	Let $(E_1, \fa_1)$ and $(E_2,\fa_2)$ be almost Loday algebroids. Then the direct decompositions inducing these given structures are determined uniquely by the choice of
	four bundle maps over $M_1\times M_2$ (which give the mixed components of the co-anchor $\lambda$):
    \[
    \begin{aligned}
        \Lambda^1:\; T^*M_1\otimes E_2\otimes E_1 \longrightarrow E_1\oplus E_2,\\ 
         \Lambda^2:\; T^*M_1\otimes E_2\otimes E_2 \longrightarrow E_1\oplus E_2,\\
         \Lambda^3:\; T^*M_2\otimes E_1\otimes E_1 \longrightarrow E_1\oplus E_2,\\
         \Lambda^4:\; T^*M_2\otimes E_1\otimes E_2 \longrightarrow E_1\oplus E_2.
    \end{aligned}
    \]
\end{proposition}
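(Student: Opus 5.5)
The plan is to show that a direct decomposition $\fa$ on $E=E_1\times E_2$ restricting to $\fa_1,\fa_2$ is completely determined by its values on a generating set of sections together with the Leibniz and co-anchor rules \eqref{eq:LeibnizRule}, \eqref{eq:co-anchorRule}. The $C^\infty(M_1\times M_2)$-module $\Gamma(E)$ is generated by the pulled-back sections $\hat\sigma_1$, $\hat\sigma_2$ with $\sigma_i\in\Gamma(E_i)$, by the isomorphism $\Gamma(\pi_i^*E_i)\cong C^\infty(M_1\times M_2)\otimes_{C^\infty(M_i)}\Gamma(E_i)$. On generators the bracket is prescribed: $[\hat\sigma_1,\hat\tau_1]_\fa=\widehat{\fa_1(\sigma_1,\tau_1)}$ and $[\hat\sigma_2,\hat\tau_2]_\fa=\widehat{\fa_2(\sigma_2,\tau_2)}$ by the requirement of inducing the given structures, while the mixed brackets vanish by Definition~\ref{def:matching}(c),(d). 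The anchor is forced to be $\rho=\rho_1\oplus\rho_2$, viewed inside $TM_1\oplus TM_2=T(M_1\times M_2)$.

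Next I expand a general bracket $\fa(f\hat\sigma_i,g\hat\tau_j)$ using \eqref{eq:LeibnizRule} in the second slot and \eqref{eq:co-anchorRule} in the first:
\[
\fa(f\hat\sigma_i,g\hat\tau_j)=fg\,[\hat\sigma_i,\hat\tau_j]_\fa+f(\rho(\hat\sigma_i)g)\hat\tau_j-g(\rho(\hat\tau_j)f)\hat\sigma_i+g\,\lambda(df\otimes\hat\sigma_i\otimes\hat\tau_j).
\]
This shows that the only freedom beyond the given data is the co-anchor $\lambda$, a bundle map $T^*(M_1\times M_2)\otimes E\otimes E\to E$. Decomposing $T^*(M_1\times M_2)=T^*M_1\oplus T^*M_2$ and $E\otimes E$ into its four blocks gives eight components. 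Four of them are forced. The components $T^*M_1\otimes E_1\otimes E_1$ and $T^*M_2\otimes E_2\otimes E_2$ must equal $\lambda_1,\lambda_2$, because $f$ pulled back from $M_i$ acting on $\Gamma(E_i)$ must reproduce $\fa_i$. The mixed components with $df\in T^*M_1$, $\alpha\in E_1$, $\beta\in E_2$ (and symmetrically with $df\in T^*M_2$, $\alpha\in E_2$, $\beta\in E_1$) are forced to vanish by applying \eqref{eq:co-anchorRule} to $\fa(f\hat\sigma_1,\hat\tau_2)$ with $f$ from $M_1$. Here the left side is zero, since $f\hat\sigma_1$ is again a section pulled back from $\Gamma(E_1)$, so the direct decomposition condition applies, and $\rho(\hat\tau_2)f=0$. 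The remaining four components are exactly $\Lambda^1,\dots,\Lambda^4$, and they are unconstrained by this argument.

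Conversely, given arbitrary $\Lambda^1,\dots,\Lambda^4$, I define $\fa$ by the displayed formula extended bilinearly, and I must check two things. First, well-definedness: the expression has to be independent of the representation of a section as $\sum f_k\hat\sigma_k$. I would reduce this to the $C^\infty(M_i)$-balancedness, i.e. moving a function $h\in C^\infty(M_1)$ between $f$ and $\sigma_1$, which is compatible precisely because $\lambda$ on that block is $\lambda_1$ and $\fa_1$ satisfies its own co-anchor rule. Second, the rules \eqref{eq:LeibnizRule} and \eqref{eq:co-anchorRule} for arbitrary functions $f\in C^\infty(M_1\times M_2)$, which should follow from the Leibniz rule for vector fields and the tensoriality of $\lambda$.

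I expect the well-definedness check to be the main obstacle, specifically confirming that the chosen $\Lambda$'s never enter a balancing relation. I would first verify this on the generating sections $\hat\sigma_i$, then extend by $C^\infty(M_1\times M_2)$-linearity.
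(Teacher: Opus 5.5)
Your proposal is correct and follows essentially the paper's argument. The generators $\hat\sigma_1,\hat\sigma_2$ reduce the bracket to $\rho$ and $\lambda$, and the anchor is forced to be $\rho_1\times\rho_2$; the paper justifies this with $0=[\hat\sigma_1,f_2\hat\sigma_2]_\fa=(\rho(\hat\sigma_1)f_2)\hat\sigma_2$ plus the Leibniz rule on $\Gamma(E_1)$, a one-line step you should include rather than assert. Four of the eight components of $\lambda$ are then forced, which the paper reads off from $\lambda(dF\otimes\alpha\otimes\beta)=S_\fa(F\alpha,\beta)-FS_\fa(\alpha,\beta)$; this is equivalent to your direct use of \eqref{eq:co-anchorRule}.

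Your converse (existence for arbitrary $\Lambda^1,\dots,\Lambda^4$) goes beyond what the paper proves. Your expectation there is right: the balancing relations only ever involve $\lambda_1$, $\lambda_2$ and the two vanishing mixed blocks, so the $\Lambda^j$ are genuinely free.
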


\begin{proof}
    The bracket is determined on $\Gamma(E_1)\cup\Gamma(E_2)$, which is a set of generators for $\Gamma(E_1\times E_2)$. Hence the bracket is completely determined by the anchor $\rho$ and co-anchor $\lambda$. 

    Let $\rho_1$ and $\rho_2$ denote the anchors of $(E_1,\fa_1)$ and $(E_2,\fa_2)$. We claim that $\rho=\rho_1\times\rho_2$. 
    
    Indeed, let $\sigma_1, \sigma_1'\in\Gamma(E_1)$, $\sigma_2\in\Gamma(E_2)$, $f_1\in C^\infty(M_1)$ and $f_2\in C^\infty(M_2)$. Then
    \[
    \begin{aligned}
        [\sigma_1,f_1\sigma_1']_\fa &= f_1[\sigma_1,\sigma_1'] + (\rho(\sigma_1)f_1) \sigma_1'\\
        0=[\sigma_1,f_2\sigma_2]_\fa &= (\rho(\sigma_1)f_2) \sigma_2\\
    \end{aligned}
    \]
    It follows that $\rho(\sigma_1)f_1\in C^\infty(M_1)$ and $\rho(\sigma_1)f_2=0$, which implies $\rho(\sigma_1)\in\mathfrak{X}(M_1)$. Therefore $\rho(\sigma_1)=\rho_1(\sigma_1)$. An analogous argument shows that $\rho(\sigma_2)=\rho_2(\sigma_2)$.

    Let $\lambda_1$ and $\lambda_2$ denote the co-anchors of $\fa_1$ and $\fa_2$. For $(p,q)\in M_1\times M_2$, the co-anchor $\lambda_{(p,q)}$ of $\fa$ has domain 
    $$
        (T^*_pM_1\oplus T^*_qM_2)\times((E_1)_p\oplus (E_2)_q)\times((E_1)_p\oplus (E_2)_q).
    $$
    Thus $\lambda$ decomposes into eight components. 
    
    Consider the symmetrization 
    $$
        S_\fa(\alpha,\beta)=[\alpha,\beta]_\fa+[\beta,\alpha]_\fa
    $$ 
    Then $S_\fa$ coincides with $S_{\fa_i}$ in $\Gamma(E_i)$ for $i=1,2$ and $S_\fa(\alpha,\beta)=0$ whenever $\alpha\in\Gamma(E_1)$ and $\beta\in\Gamma(E_2)$. 

    We claim that $\lambda$ is determined on all components except those listed in the statement of the proposition. This immediately follows from the identity
    \[
        \lambda(dF,\alpha,\beta) = S_\fa(F\alpha,\beta) - FS_\fa(\alpha,\beta),
    \]
    for all $F\in C^\infty(M_1\times M_2)$ and $\alpha,\beta\in\Gamma(E_1\times E_2)$.
    
\end{proof}

The \emph{direct product} of two Loday algebroids is obtained by choosing the mixed components $\Lambda^j=0$, $j=1,2,3,4$. In this case we denote the co-anchor by $\lambda = \lambda_1\times\lambda_2$.

On the level of sections, the direct product can be characterized by the following condition:
\begin{equation}\label{eq:lie_direct}
	\fa(f\alpha,g\beta) = -\fa(g\beta,f\alpha) = f\rho_1(\alpha)g.\beta - g\rho_2(\beta)f.\alpha,
\end{equation}
for every $\alpha\in\Gamma(E_1)$, $\beta\in\Gamma(E_2)$, and $f,g\in C^\infty(M_1\times M_2)$.

It follows from \eqref{eq:lie_direct} and Proposition \ref{prop:tensorialLoday} that if the brackets on $E_1$ and $E_2$ satisfy the Jacobi identity, then so does their direct product. 

We obtain the following immediate consequences.

\begin{corollary}
	The direct product of two Loday algebroids is again a Loday algebroid. Moreover, if both Jacobiators of the initial algebroids are trilinear over the ring of smooth functions on the base manifold, then the same holds for the Jacobiator of their direct product.
\end{corollary}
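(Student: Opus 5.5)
Both claims reduce to checking the Jacobiator on generators. First, $C^\infty(M_1\times M_2)$-module generators of $\Gamma(E_1\times E_2)$ are the pulled-back sections $\hat\sigma_1,\hat\sigma_2$ with $\sigma_i\in\Gamma(E_i)$. Second, the behaviour of $J_\fa$ under multiplication by functions is controlled by the three conditions of Proposition~\ref{prop:tensorialLoday}.

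I will prove the second assertion first. Assume $J_{\fa_1}$ and $J_{\fa_2}$ are tensorial, and verify items (a), (b), (c) of Proposition~\ref{prop:tensorialLoday} for the direct product.

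For (a), we have $\rho=\rho_1\times\rho_2$. Both sides of $\rho([\alpha,\beta]_\fa)=[\rho(\alpha),\rho(\beta)]$ obey the same Leibniz rules in $\alpha$ and $\beta$. This holds because, once one knows $\rho\circ\lambda=0$, the defects are $(\rho(\beta)f)\rho(\alpha)$ and $(\rho(\alpha)f)\rho(\beta)$ on both sides. Here $\rho\circ\lambda=0$ holds since $\lambda=\lambda_1\times\lambda_2$ and $\rho_i\circ\lambda_i=0$ by the Remark. So it suffices to check (a) on generators:
\begin{itemize}
\item for $\alpha,\beta$ both in some $E_i$, it is the hypothesis for $\fa_i$;
\item for mixed pairs, the bracket vanishes, and $[\rho_1(\sigma_1),\rho_2(\sigma_2)]=0$ because vector fields tangent to different factors and depending only on their own factor commute.
\end{itemize}

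Items (b) and (c) are handled the same way. In each, the difference between the two sides is tensorial (or satisfies matching Leibniz rules) in every slot, including the form slot $\xi$ in (b), since $\mathcal{L}_{\rho(\alpha)}(f\xi)=f\mathcal{L}_{\rho(\alpha)}\xi+(\rho(\alpha)f)\xi$. It therefore suffices to check them on generators and on $\xi=df$ with $f$ pulled back from one factor. On unmixed generators they are the corresponding identities for $\fa_i$. On mixed generators they read $0=0$, because all mixed components $\Lambda^j$ of $\lambda$ vanish, brackets between $E_1$ and $E_2$ vanish, and $\rho(\sigma_2)$ kills functions from $M_1$. Alternatively, one can argue directly from \eqref{eq:lie_direct} and the identity $J_\fa(\alpha,\beta)+J_\fa(\beta,\alpha)=\fa(S_\fa(\alpha,\beta))$ to get tensoriality in the first two slots.

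For the first assertion, assume $J_{\fa_i}=0$. Then in particular each $J_{\fa_i}$ is tensorial, so by the second assertion $J_\fa$ is $C^\infty(M_1\times M_2)$-trilinear. Hence $J_\fa$ vanishes as soon as it vanishes on all triples of generators $\hat\sigma$.
\begin{itemize}
\item If all three generators lie in one $E_i$, the restriction of $\fa$ is $\fa_i$ (pulled back), and $J_\fa=J_{\fa_i}=0$.
\item If the triple is mixed, each term of $J_\fa$ contains a bracket between an $E_1$ and an $E_2$ section, or a bracket of such a vanishing bracket. For example, $[[\sigma_1,\sigma_1'],\sigma_2]=0$ because $[\sigma_1,\sigma_1']\in\Gamma(E_1)$ is pulled back from $M_1$. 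So every term is zero.
\end{itemize}

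The main obstacle is the bookkeeping needed to justify reducing (b) and (c) to generators: one must confirm that the defect expressions are genuinely tensorial, or share identical derivation terms, in each argument. I would handle this by computing, for each of the three identities, the defect under $\beta\mapsto f\beta$ using \eqref{eq:LeibnizRule} and \eqref{eq:co-anchorRule}, noting that all extra terms cancel once (a) and $\rho\circ\lambda=0$ are known.
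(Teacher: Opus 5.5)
Your plan is the natural expansion of the paper's one-line justification. You verify items (a)--(c) of Proposition~\ref{prop:tensorialLoday} for the product on pulled-back generators, then check $J_\fa$ on generators. Your treatment of (a) and of the final generator check is fine. The argument fails in (c) and (b).

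\emph{Item (c).} Your claim that mixed generators give $0=0$ is false. Take $f$ pulled back from $M_2$, $\gamma=\hat\sigma_2$, and $\alpha,\beta\in\Gamma(E_1)$. Then $\lambda(df\otimes\alpha\otimes\beta)$ is a $\Lambda^3$-component and $\lambda(df\otimes S_\fa(\alpha,\beta)\otimes\gamma)$ is a $\Lambda^4$-component, so both vanish. Item (c) then reads $0=(\rho_2(\sigma_2)f)\,S_{\fa_1}(\alpha,\beta)$. This fails whenever $\rho_2\neq0$ and $\fa_1$ is not skew. You only considered functions from $M_1$ against $\rho(\sigma_2)$.

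\emph{Item (b).} The defect under $\alpha\mapsto f\alpha$ does not cancel using only (a) and $\rho\circ\lambda=0$. Since $\mathcal{L}_{f\rho(\alpha)}\xi=f\mathcal{L}_{\rho(\alpha)}\xi+\xi(\rho(\alpha))\,df$, the right side acquires the extra term $\xi(\rho(\alpha))\,\lambda(df\otimes\beta\otimes\gamma)$. Now take $\alpha\in\Gamma(E_2)$, $\xi$ pulled back from $M_2$, $\beta,\gamma\in\Gamma(E_1)$, and $f$ a function on $M_1$. Every other term of the defect vanishes, and this one equals $\xi(\rho_2(\alpha))\,\lambda_1(df\otimes\beta\otimes\gamma)$. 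So (b) can hold on generators and still fail in general.

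These are not bookkeeping slips: with all $\Lambda^j=0$ the corollary is false, so no argument of this shape can succeed. Let $M_1=\{*\}$ and let $E_1=\mathfrak{l}$ be the Leibniz algebra with basis $e_1,e_2$ and only nonzero bracket $[e_2,e_2]=e_1$. This is a Loday algebroid with $\rho_1=0$ and $\lambda_1=0$. Let $E_2=T\R$ with coordinate $y$. In the direct product $\lambda\equiv0$, and one computes
$[ye_2,e_2]=ye_1$, $[ye_1,\partial_y]=-e_1$, $[ye_2,\partial_y]=-e_2$, and $[e_2,\partial_y]=0$.
Hence $J_\fa(ye_2,e_2,\partial_y)=-2e_1\neq0$, while $J_\fa(e_2,e_2,\partial_y)=0$, so both assertions fail. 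The Courant case is no better: for $E_1=\T\R_x$ and $E_2=T\R_y$ one finds $J_\fa(x\partial_y,y\partial_x,dx)=dx$.

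The paper's own justification has the same hole. Equation \eqref{eq:lie_direct} only pins down $\Lambda^1$ and $\Lambda^4$, whereas (b) and (c) depend on $\Lambda^2$ and $\Lambda^3$. For instance, the usual Courant product has $\Lambda^3(\xi\otimes u\otimes v)=\SP{u,v}\rho_2^*\xi\neq0$, so it is not the $\Lambda^j=0$ product.

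Your argument does go through when both factors are Lie algebroids, since then $\lambda=0$ and $S_\fa=0$ make (b) and (c) trivial. In general the statement needs extra hypotheses or a different choice of the mixed co-anchor components.
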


\begin{corollary}
	The direct product of Lie algebroids is again a Lie algebroid. The direct product of Courant algebroids is again a Courant algebroid (with the direct product metric).
\end{corollary}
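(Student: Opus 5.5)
The statement has two parts: direct products of Lie algebroids are Lie algebroids, and direct products of Courant algebroids are Courant algebroids. The plan is to reduce both to the preceding corollary and then check the extra structure on generators. The direct product $\fa=\fa_1\times\fa_2$ is determined by its values on $\Gamma(E_1)\cup\Gamma(E_2)$, by the anchor $\rho=\rho_1\times\rho_2$, and by the co-anchor $\lambda=\lambda_1\times\lambda_2$. Any identity that is $C^\infty(M_1\times M_2)$-compatible, meaning that both sides obey the same Leibniz-type transformation rules, therefore only needs to be verified on sections of the form $f\alpha$ with $\alpha\in\Gamma(E_1)\cup\Gamma(E_2)$. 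Equation \eqref{eq:lie_direct} handles the mixed pairs. By the previous corollary, the Jacobi identity holds in both cases.

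\emph{Lie case.} We must show $S_\fa=0$. By the remark after Proposition \ref{prop:tensorialLoday}, $S_\fa(F\alpha,\beta)=FS_\fa(\alpha,\beta)+\lambda(dF\otimes\alpha\otimes\beta)$. In the Lie case $\lambda_1=\lambda_2=0$, and the mixed components vanish by definition of the direct product, so $\lambda=0$. Hence $S_\fa$ is $C^\infty$-bilinear, and it suffices to check it on generators. On $\Gamma(E_i)\times\Gamma(E_i)$ it equals $S_{\fa_i}=0$. On mixed pairs it vanishes by \eqref{eq:lie_direct}.

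\emph{Courant case.} Take the product pairing $\SP{\cdot,\cdot}=\SP{\cdot,\cdot}_1\oplus\SP{\cdot,\cdot}_2$. It is nondegenerate and symmetric, and $E_1\perp E_2$. Two axioms remain.

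For axiom (b), the map $\rho^*\colon T^*M_1\oplus T^*M_2\to E_1\oplus E_2$ is $\rho_1^*\oplus\rho_2^*$, since $\rho=\rho_1\times\rho_2$ and the pairing is orthogonal. Hence $\lambda_1\times\lambda_2(\xi\otimes\alpha\otimes\beta)$ agrees with $\SP{\alpha,\beta}\rho^*\xi$. This is checked componentwise: on the mixed components both sides vanish, because $\SP{E_1,E_2}=0$ and the direct product sets $\Lambda^j=0$. The previous step also needs care. For $\alpha\in\Gamma(E_1)$ the form $\xi\in T^*M_2$ contributes $\SP{\alpha,\beta}_1\rho_2^*\xi$, and this is zero only if the product structure forces it. I will check that the components of $\lambda$ ``determined'' in the previous proposition are exactly $\lambda_1$ on $T^*M_1\otimes E_1\otimes E_1$ and $\lambda_2$ on $T^*M_2\otimes E_2\otimes E_2$, with $T^*M_2\otimes E_1\otimes E_1$ being $\Lambda^3=0$, consistent with $\rho_2^*$ killing nothing only on the $E_2$ side. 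With this, $\lambda$ is the Courant co-anchor. Then $S_\fa-\rho^*d\SP{\cdot,\cdot}$ is $C^\infty$-bilinear, since both sides transform by the same $\lambda$ term, and it vanishes on generators: on $E_i$ pairs it is axiom (b) for $\fa_i$, and on mixed pairs both terms are zero.

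For axiom (a), the difference $\rho(\alpha)\SP{\beta,\gamma}-\SP{[\alpha,\beta],\gamma}-\SP{\beta,[\alpha,\gamma]}$ is $C^\infty$-linear in $\beta$ and $\gamma$ by the Leibniz rule. In $\alpha$ it picks up $-\rho(\gamma)f\SP{\beta,\alpha}-\rho(\beta)f\SP{\alpha,\gamma}+\SP{\lambda(df\otimes\alpha\otimes\beta),\gamma}+\SP{\beta,\lambda(df\otimes\alpha\otimes\gamma)}$, which cancels once $\lambda=\SP{\cdot,\cdot}\rho^*$. So the difference is tensorial and only needs checking on generators. There the pure cases are axiom (a) for $\fa_i$, and mixed cases vanish because brackets of mixed sections are zero and $\rho_1(\alpha)$ kills functions pulled back from $M_2$.

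The main obstacle I expect is the identification of the co-anchor components, which is needed for axiom (b). I will first try to confirm directly from \eqref{eq:lie_direct} and the definition $\Lambda^j=0$ that every mixed component of $\lambda$ equals the corresponding component of $\SP{\alpha,\beta}\rho^*\xi$. If a discrepancy appears, I will instead define the Courant product by choosing the $\Lambda^j$ to match $\SP{\alpha,\beta}\rho^*\xi$, which by the orthogonality of the summands should again give zero.
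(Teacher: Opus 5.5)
Your Lie case is fine. There every component of $\lambda$ vanishes, so $S_\fa$ is $C^\infty$-bilinear and vanishes on generators. Moreover, the $\Lambda^j=0$ product is just the usual product Lie algebroid, so quoting the previous corollary for the Jacobi identity is harmless.

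\textbf{The gap in the Courant case.} The Courant case has a genuine gap, exactly at the step you flagged. With $\rho^*=\rho_1^*\oplus\rho_2^*$, the Courant co-anchor $\SP{\alpha,\beta}\rho^*\xi$ does vanish on the components where $\alpha$ and $\beta$ lie in different factors: the two determined mixed components, and $\Lambda^1,\Lambda^4$. It does not vanish on the other two. On $T^*M_2\otimes E_1\otimes E_1$ and $T^*M_1\otimes E_2\otimes E_2$ it equals $\SP{\alpha,\alpha'}_1\,\rho_2^*\xi$ and $\SP{\beta,\beta'}_2\,\rho_1^*\xi$, which are nonzero as soon as the anchors are. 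Orthogonality of $E_1$ and $E_2$ is irrelevant there, because both entries lie in the same summand. This has three consequences:
\begin{itemize}
\item $\lambda_1\times\lambda_2$ is not the Courant co-anchor.
\item Your fallback (``choose the $\Lambda^j$ to match, which should again give zero'') fails for the same reason.
\item Your tensoriality arguments for (a) and (b) both require $\lambda=\SP{\cdot,\cdot}\rho^*$, so they collapse.
\end{itemize}

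\textbf{A counterexample.} With the literal choice $\Lambda^1=\dots=\Lambda^4=0$, the Courant half of the statement is false. Take $\T I_1\times\T I_2$ with coordinates $s,t$.
\begin{itemize}
\item Axiom (b) fails: one gets $S_\fa(t\partial_s,ds)=0$, whereas $\rho^*d\SP{t\partial_s,ds}=dt$.
\item Jacobi fails: $S_\fa(t\partial_s,s\,ds)=t\,ds$ and $[t\,ds,\partial_t]_\fa=-ds$, so
$J_\fa(t\partial_s,s\,ds,\partial_t)+J_\fa(s\,ds,t\partial_s,\partial_t)=[S_\fa(t\partial_s,s\,ds),\partial_t]_\fa=-ds\neq0$.
\end{itemize}
So even the Jacobi identity you import from the previous corollary fails for this product. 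In $\T(I_1\times I_2)$, the $\Lambda^3$-terms $dt$ and $s\,dt$ that are missing here restore both identities.

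\textbf{What is true, and how to fix it.} The statement holds for the usual product Courant algebroid, and that is what the paper's one-line proof actually relies on. Its co-anchor has $\Lambda^1=\Lambda^4=0$, but $\Lambda^3=\SP{\cdot,\cdot}_1\,\rho_2^*$ and $\Lambda^2=\SP{\cdot,\cdot}_2\,\rho_1^*$. The paper starts from that known Courant algebroid and only checks \eqref{eq:lie_direct}, which involves $\Lambda^1$ and $\Lambda^4$ alone. The same convention is what \eqref{eq:order1} uses later.

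If you adopt this co-anchor, your generator checks of (a) and (b) go through essentially as written. However, the Jacobi identity can no longer be borrowed from the previous corollary. You must either:
\begin{itemize}
\item prove it, for instance by showing $J_\fa$ is tensorial via Proposition~\ref{prop:tensorialLoday} and then evaluating on generators, where it reduces to the Jacobi identities of the factors and the vanishing of mixed brackets; or
\item take the standard product as known, as the paper does.
\end{itemize}
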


The second corollary follows by verifying that the usual product brackets for Lie and Courant algebroids satisfy \eqref{eq:lie_direct}.

\section{Local Decompositions Lemmas}\label{sec:localLemmas}

In this section we give conditions ensuring the existence of a local decomposition $TI\times E'$ around a point of the base manifold, where $I$ denotes an open interval (see Lemmas~\ref{lemma:2} and~\ref{lemma:matching}).

Let $\Gamma_{\mathrm{loc}}(E)$ be the space of local smooth sections of $E$.

\begin{lemma}\label{lemma:1}
	Let $(E,\fa)$ be an almost Loday algebroid with anchor $\rho$. Let $p\in M$ and suppose $\alpha\in\Gamma_{\mathrm{loc}}(E)$ satisfies
	$\rho(\alpha)_p\neq 0$. Then, for any $v\in E_p$, there exists a section $\beta\in\Gamma_{\mathrm{loc}}(E)$ such that
	\begin{equation}\label{eq: parametricODE}
	\left\{\begin{array}{l}
	[\alpha,\beta]_\fa=0, \\
	\beta_p=v.
	\end{array}\right. 
	\end{equation}
	Furthermore, any other section satisfying these equations that agrees with $\beta$ on a hypersurface transversal to the flow of $\rho(\alpha)$
	through $p$ is equal to $\beta$ in a sufficiently small neighborhood of $p$.
\end{lemma}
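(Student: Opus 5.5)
The key observation is that, by the Leibniz rule \eqref{eq:LeibnizRule}, the operator $D:=\fa(\alpha)\colon\beta\mapsto[\alpha,\beta]_\fa$ is a derivation of $\Gamma(E)$ with symbol $X:=\rho(\alpha)$. So the equation $[\alpha,\beta]_\fa=0$ is the linear first-order PDE $D\beta=0$. Its characteristics are the integral curves of $X$, and along each of them it reduces to a linear ODE. The plan is to straighten $X$ near $p$, write $D$ in a local frame, and solve the resulting linear ODE system with prescribed initial data on a transversal hypersurface.

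First, since $X_p\neq0$, the flow-box theorem gives coordinates $(t,y)=(t,y^1,\dots,y^{n-1})$ on a neighborhood $U=(-\epsilon,\epsilon)\times V$ of $p$, with $p=(0,0)$ and $X=\partial_t$. Shrinking $U$, choose a local frame $e_1,\dots,e_r$ of $E$ over $U$. Write $D e_j=\sum_i A_{ij}(t,y)e_i$ with $A_{ij}$ smooth; this is possible because $D$ is $\R$-linear and local, as a derivation. For $\beta=\sum_j b^j e_j$ the Leibniz property gives
\[
D\beta=\sum_i\Big(\partial_t b^i+\sum_j A_{ij}b^j\Big)e_i .
\]
Hence $[\alpha,\beta]_\fa=0$ is equivalent to the linear system $\partial_t b=-A(t,y)\,b$, with $y$ entering only as a parameter.

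For existence, take any hypersurface $H$ through $p$ transversal to $X$. After shrinking, and possibly adjusting the flow-box chart by flowing from $H$, we may assume $H=\{t=0\}$. Pick any smooth initial datum $b_0(y)$ on $V$ with $\sum_j b_0^j(0)e_j(p)=v$; the constant coefficients of $v$ in the frame will do. Let $b(t,y)$ solve the linear ODE with $b(0,y)=b_0(y)$. Because the system is linear, solutions exist on the whole interval $(-\epsilon,\epsilon)$, and by smooth dependence on parameters and initial conditions $b$ is smooth in $(t,y)$. Then $\beta=\sum b^je_j$ is the required local section.

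For uniqueness, suppose $\beta'$ also satisfies \eqref{eq: parametricODE} and agrees with $\beta$ on a transversal hypersurface $H$ through $p$. Then $\gamma=\beta-\beta'$ satisfies $D\gamma=0$ by $\R$-linearity, and $\gamma|_H=0$. Choose the flow-box chart adapted to $H$ as above, so that every point of a small neighborhood of $p$ lies on a flow line of $X$ that meets $H$ within the chart. On each such line, the coefficients of $\gamma$ solve a linear ODE with zero initial value, so $\gamma\equiv0$ there by ODE uniqueness. The only point requiring care, and the likely main obstacle, is this chart adaptation. By transversality, the map $(t,h)\mapsto\varphi^X_t(h)$, for $h\in H$ near $p$, is a local diffeomorphism at $(0,p)$ by the inverse function theorem. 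This furnishes the coordinates in which $H=\{t=0\}$ and $X=\partial_t$, and the smooth-dependence claim then follows directly from standard ODE theory.
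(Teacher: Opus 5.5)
Your proof is correct and is essentially the paper's argument: straighten $\rho(\alpha)$ to $\partial/\partial t$ with the transversal hypersurface as $\{t=0\}$, expand $\beta$ in a local frame, and reduce $[\alpha,\beta]_\fa=0$ to a linear parametric ODE whose solution is determined by its values at $t=0$. You also spell out details the paper leaves implicit, namely existence on the whole interval for linear systems, smooth dependence on parameters, and the inverse-function-theorem construction of the chart adapted to the given hypersurface.
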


\begin{proof}
	Since $\rho(\alpha)_p\neq 0$, we can choose coordinates $(t,x)$ centered at $p\in M$ such that 
	$$
        \rho(\alpha)=\frac{\partial}{\partial t}.
    $$ 
    If $\Sigma$ is a hypersurface transversal to $\rho$ these coordinates  may be chosen so that $(0,x)$ parametrizes $\Sigma$ is a neighborhood of $p$.
    
	Let $\{\beta_1,...,\beta_r\}\subseteq\Gamma_{\mathrm{loc}}(E)$ be a local frame, and let 
	$\Gamma_j^k,\in C^{\infty}_{\mathrm{loc}}(M)$, for $0\leq j,k\leq r$,
	be defined by
	\begin{equation}
	[\alpha,\beta_j]_\fa = \displaystyle\sum_{k=1}^r\Gamma_j^k\beta_k.
	\end{equation}
	Given $\beta\in\Gamma_{\mathrm{loc}}(E)$, we may write
    \begin{equation}
	   \beta = \displaystyle\sum_{k=1}^r b^k\beta_k.
	\end{equation}
    for some $b^k\in C^{\infty}_{\mathrm{loc}}(M)$. Then
	\begin{eqnarray*}
		[\alpha,\beta]_\fa &=& \sum_j(b^j[\alpha,\beta_j]_\fa+\rho(\alpha)b^j\beta_j) \\
		&=& \sum_{j,k}b^j\Gamma_j^k\beta_k+\sum_k\frac{\partial b^k}{\partial t}\beta_k \\
		&=& \sum_k\big(\frac{\partial b^k}{\partial t}+\sum_jb^j\Gamma_j^k\big)\beta_k.
	\end{eqnarray*}
	Therefore Equation \eqref{eq: parametricODE} is equivalent to solving the following parametric ODE:
	\begin{equation}
	   \left\{\begin{array}{l}
	   \displaystyle\frac{\partial b^k}{\partial t}(t,x)+\sum_jb^j(t,x)\Gamma_j^k(t,x)=0 \\
	   b^k(0,0)=v^k
	\end{array}\right. ,
	\end{equation}
	where $v=\sum_kv^k\beta_k(p)$. 
	
	This ODE admits a solution, which is uniquely determined by the choice of the initial values $b^k(0,x)$. 
	
\end{proof}

We say that an almost Loday algebroid $(E,\fa,\rho)$ is \emph{involutive} if 
\[
    \rho([\alpha,\beta]_\fa) = [\rho(\alpha),\rho(\beta)],
\]
for every $\alpha,\beta \in\Gamma(E)$. By Proposition \ref{prop:tensorialLoday}, every Loday algebroid is involutive.

The next lemma provides a condition for the existence of a local semi-matching of the form $TI\times E'$ (for the factor $E'$). 

\begin{lemma}\label{lemma:2}
	Let $(E,\fa)$ be an involutive almost Loday algebroid with anchor $\rho$. Let $p\in M$, and let $\alpha\in\Gamma_{\mathrm{loc}}(E)$, such that
	$$
        \rho(\alpha)\neq 0, \quad J_\fa(\alpha)\equiv 0,
    $$
	on a neighborhood of $p$.
	Then there exists a neighborhood $U$ of $p$ and a Loday algebroid isomorphism
	$$\varphi: (E,\fa,\rho)|_U \to ( TI\times E', \fa',\mathrm{id}\times \rho'),$$
	where $I\subset\mathbb{R}$ is an open interval and $E'$ is a vector bundle over a hypersurface through $p$ transverse to $\rho(\alpha)$, such that $\varphi_*\alpha=\partial/\partial t$ and 
	$$
        \left[\frac{\partial}{\partial t}, \ \Gamma(E')\right]_{\fa'} = \{0\}.
    $$
	Moreover, $TI\times E'$ is a semi-matching for the factor $E'$.
\end{lemma}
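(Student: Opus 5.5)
Following the approach of Lemma~\ref{lemma:1}, we straighten $\rho(\alpha)$ and then use $\alpha$-invariant sections to build the factor $E'$. First choose coordinates $(t,x)$ centered at $p$, defined on $U\cong I\times\Sigma$, such that $\rho(\alpha)=\partial/\partial t$ and $\Sigma=\{t=0\}$ is a hypersurface through $p$ transverse to $\rho(\alpha)$. Next choose a complement $E'_0\subset E|_\Sigma$ to the line spanned by $\alpha|_\Sigma$ (possible since $\alpha_p\neq 0$, shrinking $\Sigma$ if necessary), and a local frame $e_1,\dots,e_{r-1}$ of $E'_0$ over $\Sigma$. By Lemma~\ref{lemma:1}, applied with initial data on $\Sigma$, each $e_i$ extends uniquely to a local section $\beta_i$ with $[\alpha,\beta_i]_\fa=0$ and $\beta_i|_\Sigma=e_i$. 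Set $E'|_U:=\on{span}\{\beta_i\}$. After shrinking $U$, the sections $\alpha,\beta_1,\dots,\beta_{r-1}$ form a frame, because they form a frame along $\Sigma$.

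The isomorphism $\varphi$ sends $\alpha\mapsto\partial/\partial t$ and identifies $\beta_i$ with the pullback of $e_i$ along $\pi_2\colon I\times\Sigma\to\Sigma$. To see that the anchor becomes $\mathrm{id}\times\rho'$, use involutivity:
\[
[\partial_t,\rho(\beta_i)]=\rho([\alpha,\beta_i]_\fa)=0 .
\]
Hence $\rho(\beta_i)$ is $t$-invariant. We must also show that $\rho(\beta_i)$ has no $\partial_t$-component, so that it is a vector field on $\Sigma$. The $\partial_t$-component $c_i$ is $t$-independent, so we arrange $c_i=0$ on $\Sigma$ by replacing $e_i$ with $e_i-c_i\alpha|_\Sigma$. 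This does not spoil the condition $[\alpha,\beta_i]_\fa=0$, provided $[\alpha,\alpha]_\fa=0$. That identity is not automatic, and it is a point to check: $[\alpha,\alpha]_\fa$ lies in $\ker\rho$, and by $J_\fa(\alpha)=0$ it is $\alpha$-invariant. One may need to replace $\alpha$ by $\alpha+\gamma$ with $\gamma\in\ker\rho$ chosen via Lemma~\ref{lemma:1}, or else accept an anchor of the form $\partial_t\times\rho'$ up to this adjustment. After these corrections, $\rho'(e_i):=\rho(\beta_i)|_\Sigma$ defines the anchor on $E'$.

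Define the bracket $\fa'$ on $TI\times E'$ by transport via $\varphi$. The property $[\partial_t,\Gamma(E')]_{\fa'}=0$ holds for the generators $\beta_i$ by construction, and hence for all $t$-independent combinations $f(x)\beta_i$, by the Leibniz rule together with $\partial_t f=0$. This is the meaning of $\Gamma(E')$ for sections pulled back from $\Sigma$.

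The main step is the semi-matching property $[\beta_i,\beta_j]_\fa\in\Gamma(E')$, with coefficients independent of $t$. Here we use $J_\fa(\alpha,\cdot,\cdot)=0$:
\[
[\alpha,[\beta_i,\beta_j]_\fa]_\fa=[[\alpha,\beta_i]_\fa,\beta_j]_\fa+[\beta_i,[\alpha,\beta_j]_\fa]_\fa=0 .
\]
So $[\beta_i,\beta_j]_\fa$ is $\alpha$-invariant. Writing it as $a\,\alpha+\sum_k b^k\beta_k$ and using $[\alpha,\alpha]_\fa=0$ and $[\alpha,\beta_k]_\fa=0$, invariance gives $\partial_t a=\partial_t b^k=0$. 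The $\alpha$-component $a$ is detected by the anchor: $\rho([\beta_i,\beta_j]_\fa)=[\rho(\beta_i),\rho(\beta_j)]$ is tangent to $\Sigma$, so $a=0$. This is why the $\partial_t$-components of $\rho(\beta_i)$ had to be removed. Therefore $[\Gamma(E'),\Gamma(E')]\subseteq\Gamma(E')$ with $t$-independent structure functions, which descends to a bracket on $E'\to\Sigma$ and establishes the semi-matching.

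The expected obstacle is guaranteeing $[\alpha,\alpha]_\fa=0$, equivalently ensuring that $\alpha$ can be normalized. I would first try the replacement $\alpha\mapsto\alpha-\tfrac12 S$-type corrections in $\ker\rho$, solved by Lemma~\ref{lemma:1}.
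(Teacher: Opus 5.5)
Your architecture matches the paper's: straighten $\rho(\alpha)=\partial_t$, build an $\alpha$-invariant frame with Lemma~\ref{lemma:1}, use involutivity for the anchor and $J_\fa(\alpha)\equiv 0$ for the brackets. As written, however, the argument has a gap. Two steps depend on $[\alpha,\alpha]_\fa=0$: removing the $\partial_t$-components of $\rho(\beta_i)$, and proving that the coefficients of $[\beta_i,\beta_j]_\fa$ are $t$-independent. That identity is not a hypothesis here (it is the extra hypothesis of Lemma~\ref{lemma:matching}), and you leave it unresolved.

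Your proposed remedy cannot close it. Replacing $\alpha$ by $\alpha+\gamma$ destroys the required normalization $\varphi_*\alpha=\partial/\partial t$. Moreover, such a $\gamma$ need not exist at all. Take the bundle $E=T\R\oplus\R$ over $M=\R$ (second summand the trivial line bundle), with $\rho$ the projection and $[f_1\partial_t+g_1,\,f_2\partial_t+g_2]_\fa=(f_1f_2'-f_2f_1')\partial_t+(f_1g_2'-f_2g_1'+f_1f_2)$ for $f_i,g_i\in C^\infty(\R)$. This bracket is involutive with zero co-anchor, and $\alpha=\partial_t$ satisfies $J_\fa(\alpha,\cdot,\cdot)\equiv 0$, so all hypotheses of the lemma hold. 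Yet $[\alpha',\alpha']_\fa=1\neq 0$ for every $\alpha'$ with $\rho(\alpha')=\partial_t$.

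The missing observation is that $[\alpha,\alpha]_\fa$ never needs to vanish.

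\emph{Anchor.} Do not subtract $c_i\alpha$ from $\beta_i$; instead change the initial data on $\Sigma$ and re-extend with Lemma~\ref{lemma:1}. Concretely, take $E_0':=\{e\in E|_\Sigma:\ \rho(e)t=0\}$, which is a complement of $\alpha|_\Sigma$ because $\rho(\alpha)t=1$. This is exactly your replacement $e_i\mapsto e_i-c_i\alpha|_\Sigma$, followed by re-solving rather than subtracting. Since $[\partial_t,\rho(\beta_i)]=\rho([\alpha,\beta_i]_\fa)=0$, the function $\rho(\beta_i)t$ is $t$-independent and vanishes on $\Sigma$, hence vanishes identically.

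The paper reaches the same normalization differently, using pointwise initial data. It subtracts $\tilde\beta_0/(\rho(\tilde\beta_0)t)$, where $\tilde\beta_0$ is an $\alpha$-invariant section with $(\tilde\beta_0)_p=\alpha_p$ rather than $\alpha$ itself. It first checks, via involutivity, that $h(\rho(\gamma)t)\beta$ is $\alpha$-invariant whenever $\beta$ and $\gamma$ are.

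\emph{Bracket.} Reverse your order. First show $a=\rho([\beta_i,\beta_j]_\fa)t=[\rho(\beta_i),\rho(\beta_j)]t=0$. Only then write $0=[\alpha,[\beta_i,\beta_j]_\fa]_\fa=\sum_k(\partial_t b^k)\beta_k$, which uses only $[\alpha,\beta_k]_\fa=0$.

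More cleanly, $\Gamma(E')$ is exactly $\{s:\ [\alpha,s]_\fa=0,\ \rho(s)t=0\}$. This space is closed under $\fa$ by $J_\fa(\alpha)\equiv 0$ and involutivity. That argument also covers brackets of arbitrary sections $f\beta_i$, $g\beta_j$, where co-anchor terms appear, not only brackets of frame elements.
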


\begin{proof}
    We divide the proof into steps.
    
    \emph{Step 1: Choice of coordinates and basis.}
    Choose coordinates $(t,x)$ centered at $p\in M$ such that 
	$$
        \rho(\alpha)=\frac{\partial}{\partial t}.
    $$ 
    Let $\{v_0,v_1,...,v_r\}\subseteq E_p$ be a basis of $E_p$ satisfying
	$$
    \rho(v_i)t\neq 0, \quad i=0,1,...,r,$$ 
    and set $v_0:=\alpha(p)$. 

    \medskip

    \emph{Step 2: Construction of an adapted frame.}
	
	By Lemma \ref{lemma:1} there exists a local frame 
	$\{\tilde{\beta}_0,...,\tilde{\beta}_r\}\subseteq\Gamma_{\mathrm{loc}}(E)$ such that
    \[
        [\alpha,\tilde{\beta}_i]_\fa = 0, \quad (\tilde{\beta}_i)_p=v_i
    \]

    After possibly shrinking the neighborhood of $p$, we may assume that 
	$$
        \rho(\tilde{\beta}_i)_qt\neq 0
    $$ 
    for every $q$.
	
	Define a new frame $\{\beta_0,...,\beta_r\}\subseteq\Gamma_{\mathrm{loc}}(E)$ by
	\begin{equation}
	\beta_i:= \left\{\begin{array}{ll}
	\displaystyle\frac{\tilde{\beta}_0}{\rho(\tilde{\beta}_0)t}, & i=0, \\
	\displaystyle\frac{\tilde{\beta}_i}{\rho(\tilde{\beta}_i)t}-\frac{\tilde{\beta}_0}{\rho(\tilde{\beta}_0)t}, &  1\leq i\leq r.
	\end{array}\right. 
	\end{equation}
    
    \medskip
    \emph{Step 3: Properties of the adapted frame.}
    
	We first show that
    \[
        [\alpha,\beta_i]_\fa=0, \quad \rho(\beta_i)t=\delta_{i0}.
    \]
	Indeed, it suffices to show that if $[\alpha,\beta]_\fa=[\alpha,\gamma]_\fa=0$, then 
    $$[\alpha,h(\rho(\gamma)t)\beta]_\fa=0,$$
    for any $h\in C^{\infty}(\mathbb{R})$. This follows from the Leibniz rule:
    \begin{eqnarray*}
        [\alpha, \ h(\rho(\gamma)t)\beta]_\fa &=& h(\rho(\gamma)t)[\alpha,\beta]_\fa+\rho(\alpha)(h(\rho(\gamma)t))\beta \\
        &=& h'(\rho(\gamma)t)\rho(\alpha)\rho(\gamma)t\beta \\
        &=& ([\rho(\alpha),\rho(\gamma)]t+\rho(\gamma)\rho(\alpha)t)h'(\rho(\gamma)t)\beta \\
        &=& (\rho([\alpha,\gamma]_\fa)t+\rho(\gamma)1)h'(\rho(\gamma)t)\beta \\
        &=& 0.
    \end{eqnarray*}

    Next we show that, for $1\le i,j\le r$, we have
    \[
        [\beta_i,\beta_j]_\fa=\displaystyle\sum_k\Gamma_{ij}^k(x)\beta_k, \quad \rho(\beta_i)=\displaystyle\sum_l\theta_{il}(x)\frac{\partial}{\partial x_l}
    \]
	
	Write 
    $$
        \rho(\beta_i)=\sum_l\theta_{il}(t,x)\frac{\partial}{\partial x_l}+b(t,x)\frac{\partial}{\partial t}.
    $$
    Since $\rho(\beta_i)t=0$, we have $b(t,x)=0$, hence 
    $$
        \rho(\beta_i)=\sum_l\theta_{il}(t,x)\frac{\partial}{\partial x_l}.
    $$
    Using $[\alpha,\beta_i]_\fa=0$ we obtain
    \begin{eqnarray*}
        0 &=& \rho([\alpha,\beta_i]_\fa)\\
        &=& [\frac{\partial}{\partial t},\rho(\beta_i)] \\
        &=& \sum_l\frac{\partial\theta_{il}}{\partial t}(t,x)\frac{\partial}{\partial x_l},
    \end{eqnarray*}
    hence $\theta_{il}(t,x)=\theta_{il}(x)$.
    
    Similarly, write 
    $$
        [\beta_i,\beta_j]_\fa=\sum_{k=1}^r\Gamma_{ij}^k(t,x)\beta_k+c(t,x)\alpha.
    $$ 
    Then
    $$
        c(t,x)=\rho([\beta_i,\beta_j]_\fa)t=[\rho(\beta_i),\rho(\beta_j)]t=0
    $$
    since $\rho(\beta_i)t=0$, for $i\geq 1$. 
    
    Finally, since $J_\fa(\alpha)\equiv 0$, we have
    \begin{eqnarray*}
        0 &=& J_\fa(\alpha,\beta_i,\beta_j) \\
        &=& [[\alpha,\beta_i]_\fa,\beta_j]_\fa+[\beta_i,[\alpha,\beta_j]_\fa]_\fa-[\alpha,[\beta_i,\beta_j]_\fa]_\fa \\   
        &=& -\sum_{k\geq 1}\left(\Gamma_{ij}^k(t,x)[\alpha,\beta_k]_\fa+\rho(\alpha)\Gamma_{ij}^k\beta_k\right) \\
        &=&-\sum_{k\geq 1}\frac{\partial \Gamma_{ij}^k}{\partial t}(t,x)\beta_k.
    \end{eqnarray*}
    Thus $\Gamma_{ij}^k(t,x)=\Gamma_{ij}^k(x)$.
    
    \medskip
	\emph{Step 4: Construction of the semi-matching.}

	Let $M'$ be the set of points with coordinates $(0,x)$. Let $E'$ be the restriction to $M'$ of $\mathrm{span}\{\beta_i\}_{i\geq 1}$. Define  $\varphi$ to be the isomorphism taking the ordered frame 
    $$
        (\alpha,\beta_1,...,\beta_r)
    $$ to 
    $$
        (\partial/\partial t,\beta_1|_{M'},...,\beta_r|_{M'}).
    $$
    The required properties follow immediately.
	
\end{proof}

In general, the bracket induced on $TI\times E'$ does not restrict to $\Gamma(TI)\subset \Gamma(TI\times E')$, and even when it does, the restriction need not coincide with the Lie bracket of vector fields. The next lemma gives conditions guaranteeing this occurs, and furthermore specifies when the resulting decomposition is semi-direct.

\begin{lemma}\label{lemma:matching}
	Let $(E,\fa,\rho, \lambda)$ be an involutive almost Loday algebroid, let $p\in M$ and let $\alpha\in\Gamma_{\mathrm{loc}}(E)$. Assume that on a neighborhood of $p$ we have
	\[
        \rho(\alpha)\neq 0, \qquad J_\fa(\alpha)\equiv 0, \qquad \fa(\alpha,\alpha)=0,
    \]
    and that there exists a closed $1$-form  $\xi\in\Omega^1_{\mathrm{loc}}(M)$ such that
    \[
        \lambda(\xi\otimes\alpha\otimes\alpha)\equiv 0, \qquad \xi(\rho(\alpha))\equiv 1.
    \]
	
	Then, after possibly shrinking the neighborhood of $p$, there exists an almost Loday isomorphism
	\[
        \varphi: (E,\fa,\rho,\lambda)|_U \to (TI\times E', \fa',\mathrm{id}\times\rho',\lambda'),
    \]
    such that 
    \[
    \varphi_*\alpha=\partial/\partial t, \qquad \varphi_*\xi=dt
    \]
	where $I\subset\mathbb{R}$ is an open interval and $E'$ is a vector bundle over a hypersurface through $p$ transverse to $\rho(\alpha)$.

    Moreover, $TI\times E'$ is a matching satisfying
	$$
        \left[\frac{\partial}{\partial t}, \ \Gamma(E')\right]_{\fa'} = \{0\},
    $$
	and the restriction of $\fa'$ to $\Gamma(TI)$ coincides with the Lie bracket of vector fields. 
	
	If in addition we have 
    \[
        \lambda(\xi\otimes\alpha\otimes -)\equiv 0,
    \] 
    then the decomposition $TI\times E'$ is semi-direct.
\end{lemma}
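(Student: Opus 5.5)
The plan is to rerun the construction of Lemma~\ref{lemma:2}, with two changes. The time coordinate is taken from the closed form $\xi$, and $\alpha$ itself is kept as the first frame element. The new hypotheses are then used to identify the bracket on $\Gamma(TI)$ and the mixed brackets.

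\emph{Step 1: coordinates from $\xi$.} Since $\xi$ is closed, after shrinking around $p$ we can write $\xi=dt$ for a function $t$ with $t(p)=0$. Because $\rho(\alpha)t=\xi(\rho(\alpha))\equiv 1$, the function $t$ is a submersion and $\rho(\alpha)$ is transverse to its level sets. Let $M'=\{t=0\}$. Flowing $M'$ along $\rho(\alpha)$ gives coordinates $(t,x)$ centered at $p$ in which $\rho(\alpha)=\partial/\partial t$ and $\xi=dt$.

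\emph{Step 2: adapted frame containing $\alpha$.} Complete $v_0=\alpha(p)$ to a basis $v_0,\dots,v_r$ of $E_p$. Lemma~\ref{lemma:1} gives local sections $\tilde\beta_i$ with $[\alpha,\tilde\beta_i]_\fa=0$ and $(\tilde\beta_i)_p=v_i$ for $i\ge 1$. Set
\[
\beta_i:=\tilde\beta_i-(\rho(\tilde\beta_i)t)\,\alpha .
\]
The hypothesis $\fa(\alpha,\alpha)=0$, the Leibniz rule and involutivity give
\[
[\alpha,\beta_i]_\fa=-\rho(\alpha)\big(\rho(\tilde\beta_i)t\big)\,\alpha
=-\big(\rho([\alpha,\tilde\beta_i]_\fa)t+\rho(\tilde\beta_i)\rho(\alpha)t\big)\alpha=0 .
\]
This uses $\rho(\alpha)t=1$, so the second term vanishes as well. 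We also get $\rho(\beta_i)t=0$. After shrinking, $(\alpha,\beta_1,\dots,\beta_r)$ is a frame. Step 3 of Lemma~\ref{lemma:2} can then be copied verbatim, with $\alpha$ in the role of $\beta_0$, using $J_\fa(\alpha)\equiv 0$ and involutivity. This shows that $\rho(\beta_i)=\sum_l\theta_{il}(x)\partial_{x_l}$ and $[\beta_i,\beta_j]_\fa=\sum_{k\ge1}\Gamma^k_{ij}(x)\beta_k$.

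We define $E'=\mathrm{span}\{\beta_i\}|_{M'}$ and let $\varphi$ send $(\alpha,\beta_1,\dots,\beta_r)$ to $(\partial_t,\beta_1|_{M'},\dots,\beta_r|_{M'})$, as in Lemma~\ref{lemma:2}. Then $\varphi_*\alpha=\partial_t$ and $\varphi_*\xi=dt$. The transported bracket $\fa'$ is an almost Loday structure isomorphic to $\fa$, with $[\partial_t,\Gamma(E')]_{\fa'}=0$ and $\Gamma(E')$ closed under $\fa'$.

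\emph{Step 3: the $TI$ factor.} It remains to check that $\Gamma(TI)$, meaning sections $f(t)\partial_t$, is closed and carries the vector-field bracket. For $f=f(t)$ and $g=g(t)$, the rules \eqref{eq:LeibnizRule} and \eqref{eq:co-anchorRule} give
\[
[f\alpha,g\alpha]_\fa=fg'\alpha+fg[\alpha,\alpha]_\fa-gf'\alpha+gf'\lambda(\xi\otimes\alpha\otimes\alpha)=(fg'-gf')\alpha ,
\]
where we used $df=f'\xi$, $\fa(\alpha,\alpha)=0$ and $\lambda(\xi\otimes\alpha\otimes\alpha)\equiv0$. So $TI\times E'$ is a matching, and the bracket on $TI$ is the Lie bracket of vector fields.

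\emph{Step 4: semi-directness.} For the last claim, take $f=f(t)$ and $g=g(x)$. Then
\[
[f\alpha,g\beta_i]_\fa=f(\rho(\alpha)g)\beta_i+fg[\alpha,\beta_i]_\fa-g f'(\rho(\beta_i)t)\alpha+gf'\lambda(\xi\otimes\alpha\otimes\beta_i).
\]
The first term vanishes because $g$ does not depend on $t$. The second vanishes by Step 2, and the third because $\rho(\beta_i)t=0$. The fourth vanishes under the extra hypothesis $\lambda(\xi\otimes\alpha\otimes-)\equiv0$. Hence $[\Gamma(TI),\Gamma(E')]_{\fa'}=0$, and the decomposition is semi-direct.

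\emph{Main obstacle.} The delicate step is Step 2. We need the $TI$ generator to be exactly $\alpha$, rather than a rescaling of some $\tilde\beta_0$ as in Lemma~\ref{lemma:2}, while keeping $[\alpha,\beta_i]_\fa=0$ and $\rho(\beta_i)t=0$ simultaneously. The hypotheses $\fa(\alpha,\alpha)=0$ and $\rho(\alpha)t\equiv1$, which comes from the closed form $\xi$, are precisely what make the correction $\beta_i=\tilde\beta_i-(\rho(\tilde\beta_i)t)\alpha$ preserve both properties.
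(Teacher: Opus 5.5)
Your proposal is correct and follows the paper's proof. You straighten coordinates so that $\xi=dt$ and $\rho(\alpha)=\partial/\partial t$, reuse the frame construction of Lemma~\ref{lemma:2}, and verify the $TI$ bracket and semi-directness directly from \eqref{eq:LeibnizRule}--\eqref{eq:co-anchorRule} using $\lambda(\xi\otimes\alpha\otimes\alpha)=0$ and $\lambda(\xi\otimes\alpha\otimes -)=0$. Your correction $\beta_i=\tilde\beta_i-(\rho(\tilde\beta_i)t)\alpha$, which uses $\fa(\alpha,\alpha)=0$, is a valid minor variant of the paper's normalization. The ``obstacle'' you single out is not really one, though: the isomorphism built in Lemma~\ref{lemma:2} already sends $\alpha$ itself (not $\beta_0$) to $\partial/\partial t$, so the paper can simply invoke that lemma.
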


\begin{proof}
	Since $d\xi=0$ and $\xi(\rho(\alpha))\equiv 1$, we can choose coordinates $(t,x)$ such that
	\[
        \rho(\alpha)=\frac{\partial}{\partial t}, \qquad \xi=dt.
    \]
    Applying Lemma~\ref{lemma:2} we get an isomorphism $\varphi$. Thus it remains to verify that $\fa'$ restricts to $\Gamma(TI)$ and this restriction is the Lie bracket of vector fields.
	
	Since $\varphi$ transports $\fa$ to $\fa'$ we have 
    \[
        [\partial/\partial t,\partial/\partial t]_{\fa'}=0,\qquad \lambda'(dt\otimes \partial/\partial t\otimes \partial/\partial t)=0.
    \]
   Hence, for $f,g\in C^\infty(I)$,
	\begin{eqnarray*}
		\Big[ f\cdot\frac{\partial}{\partial t}, g\cdot\frac{\partial}{\partial t} \Big]_{\fa'} &=& 
		 fg  \Big[\frac{\partial}{\partial t},\frac{\partial}{\partial t}\Big]_{\fa'}
		 + gf'\cdot \lambda'\Big(dt\otimes\frac{\partial}{\partial t}\otimes\frac{\partial}{\partial t}\Big)+ 
		 \big(fg'-gf'\big)\frac{\partial}{\partial t} \\ 
		 &=& \big(fg'-gf'\big)\frac{\partial}{\partial t}\\
		 &=&\Big[ f\cdot\frac{\partial}{\partial t}, g\cdot\frac{\partial}{\partial t} \Big],
	\end{eqnarray*}
    which is the usual Lie bracket.
    
	Finally, suppose that $\lambda(\xi\otimes\alpha\otimes\cdot)\equiv 0$. Then 
    \[
        \lambda'(\xi\otimes\partial/\partial t\otimes -)\equiv 0,
    \]
    and therefore
	\[
        \Big[f\cdot\frac{\partial}{\partial t}, \beta\Big]_{\fa'} = f'\cdot\lambda'\Big(dt\otimes\frac{\partial}{\partial t}\otimes\beta\Big)=0,
    \]
	for every $f\in C^\infty(I)$ and $\beta\in\Gamma(E')$. Hence the decomposition is semi-direct.
	
\end{proof}

\section{Splitting for Courant Algebroids}\label{sec:Courant}
In this section we give an alternative proof of the splitting theorem for Courant algebroids from \cite[Corollary 6.1]{BBLM}, through a more elementary approach. We begin with the following lemma.

\begin{lemma}\label{lemma:goodSection}
	Let $(E,\fa,\rho,\SP{\cdot,\cdot})$ be a Courant algebroid and let $p\in M$ with $\rho_p\neq 0$. 
	Then there exists $\alpha\in\Gamma_{\mathrm{loc}}(E)$ such that
    \[
    \rho(\alpha)_p\neq 0, \qquad [\alpha,\alpha]_\fa=0, \qquad \SP{\alpha,\alpha}=0.
    \]
\end{lemma}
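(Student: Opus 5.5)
The key observation is that for a Courant algebroid the condition $[\alpha,\alpha]_\fa=0$ follows from $\SP{\alpha,\alpha}\equiv 0$ near $p$. Indeed, $2[\alpha,\alpha]_\fa=S_\fa(\alpha,\alpha)=\rho^*d\SP{\alpha,\alpha}$. So it suffices to construct a local section $\alpha$ that is isotropic on a whole neighborhood of $p$ and satisfies $\rho(\alpha)_p\neq 0$.

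First I establish that $\rho\circ\rho^*=0$. A Courant algebroid is a Loday algebroid, hence involutive by Proposition~\ref{prop:tensorialLoday}. The Remark following that proposition then gives $\rho\circ\lambda\equiv 0$. Since $\lambda(\xi\otimes\beta\otimes\gamma)=\SP{\beta,\gamma}\rho^*\xi$, I choose local sections $\beta,\gamma$ with $\SP{\beta,\gamma}=1$, which is possible by nondegeneracy. This yields $\rho(\rho^*\xi)=0$ for every $1$-form $\xi$. Consequently, every section of the form $\rho^*\xi$ is isotropic:
\[
\SP{\rho^*\xi,\rho^*\xi}=\xi(\rho\rho^*\xi)=0 .
\]

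Next comes the construction. Since $\rho_p\neq 0$, I pick a local section $e$ with $\rho(e)_p\neq 0$. I then pick a local $1$-form $\xi$ with $\xi(\rho(e))\equiv 1$ near $p$; for instance, take coordinates with $\rho(e)=\partial/\partial t$ and $\xi=dt$. Set $u:=\rho^*\xi$. Then $\SP{e,u}=\xi(\rho(e))=1$ and $\SP{u,u}=0$. Define
\[
\alpha := e-\tfrac12\SP{e,e}\,u .
\]
Expanding the pairing gives
\[
\SP{\alpha,\alpha}=\SP{e,e}-\SP{e,e}\SP{e,u}+\tfrac14\SP{e,e}^2\SP{u,u}=0
\]
identically near $p$. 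Moreover $\rho(\alpha)=\rho(e)-\tfrac12\SP{e,e}\rho(u)=\rho(e)$, which is nonzero at $p$. Finally, $[\alpha,\alpha]_\fa=\tfrac12\rho^*d\SP{\alpha,\alpha}=0$.

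The only step needing care is the identity $\rho\circ\rho^*=0$, which is what makes $u$ isotropic and leaves the anchor of $\alpha$ unchanged. The argument above derives it from involutivity via the co-anchor formula. Beyond that, the construction reduces to the pointwise linear algebra carried out uniformly over the neighborhood.
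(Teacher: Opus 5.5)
Your proof is correct and uses the same idea as the paper: correct a section with nonvanishing anchor by a multiple of $\rho^*dt$, using $\rho\circ\rho^*=0$ and $\SP{e,\rho^*dt}=\rho(e)t=1$ to make $\SP{\alpha,\alpha}\equiv 0$, after which $[\alpha,\alpha]_\fa=\tfrac12\rho^*d\SP{\alpha,\alpha}=0$. Your version is slightly leaner. Taking the coefficient to be the function $\tfrac12\SP{e,e}$ removes the paper's preliminary steps of finding $v$ with $\SP{v,v}_p\neq 0$ and normalizing by $\sqrt{|\SP{\bar{\alpha},\bar{\alpha}}|}$, and you explicitly justify $\rho\circ\rho^*=0$ via $\rho\circ\lambda\equiv 0$, which the paper uses without comment.
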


\begin{proof}
	Since $[\alpha,\alpha]_\fa=(1/2)\rho^*d\SP{\alpha,\alpha}$, it suffices to find $\alpha$ such that $\rho(\alpha)_p\neq 0$ and
	$\SP{\alpha,\alpha}=0$.
	
	First observe that there exists $v\in E_p$ such that 
    \[
        \rho_p(v)\neq 0, \quad \text{and}\quad \SP{v,v}_p\neq 0.
    \]
    Indeed, otherwise the quadratic function $w\mapsto \SP{w,w}$ would vanish on the nonempty open set 
    \[
        \{w\in E_p \ | \ \rho_p(w)\neq 0\},
    \]
    and hence vanish identically on $E_p$. This would imply that the pairing $\SP{\cdot,\cdot}_p$ is both skew-symmetric and symmetric, and therefore $\SP{\cdot,\cdot}_p\equiv 0$, contradicting the non-degeneracy of the pairing.
	
	Let $\bar{\alpha}\in\Gamma_{\mathrm{loc}}(E)$ be a local section satisfying $\bar{\alpha}_p=v$. Then, after possibly shrinking the neighborhood $U$ of $p$, we have
    \[
        \rho(\bar{\alpha})_q\neq 0\quad\text{and}\quad \SP{\bar{\alpha},\bar{\alpha}}_q\neq 0, \quad \forall q\in U.
    \]
	
	We may further assume that there exist coordinates $(t,x_1,...,x_k)$ on $U$ such that
	\[
	   \rho\!\left(
        \frac{\bar{\alpha}}{\sqrt{|\SP{\bar{\alpha},\bar{\alpha}}|}}
	   \right)
	   =
	   \frac{\partial}{\partial t}.
	\]
	
	Let $Dt:=\rho^*dt$ and define $\alpha$ by
    \[
        \alpha=\frac{\bar{\alpha}}{\sqrt{|\SP{\bar{\alpha},\bar{\alpha}}|}}-\frac{1}{2}\mathrm{sgn}(\SP{\bar{\alpha},\bar{\alpha}})\,Dt,
    \]
	where $\mathrm{sgn}:\mathbb{R}\setminus\{0\}\to\{-1,1\}$ denotes the sign function. 
	
	Since $\rho(Dt)=0$, we have $\rho(\alpha)=\partial/\partial t$. Moreover
	\begin{eqnarray*}
		\SP{\alpha,\alpha} 
        &=& 
        \frac{\SP{\bar{\alpha},\bar{\alpha}}}{|\SP{\bar{\alpha},\bar{\alpha}}|} 
		+ \frac{1}{4}\SP{Dt,Dt} 
        - \mathrm{sgn}(\SP{\bar{\alpha},\bar{\alpha}})
		\SP{\frac{\bar{\alpha}}{\sqrt{|\SP{\bar{\alpha},\bar{\alpha}}|}},Dt}\\
		&=& 
        \mathrm{sgn}(\SP{\bar{\alpha},\bar{\alpha}})
        +\frac{1}{4}\rho(Dt)
        -\mathrm{sgn}(\SP{\bar{\alpha},\bar{\alpha}})
		\rho\!\left(\frac{\bar{\alpha}}{\sqrt{|\SP{\bar{\alpha},\bar{\alpha}}|}}\right)\!t \\
		&=& 
        \mathrm{sgn}(\SP{\bar{\alpha},\bar{\alpha}})
        +0
        -\mathrm{sgn}(\SP{\bar{\alpha},\bar{\alpha}}) 
        =
        0.
	\end{eqnarray*}
\end{proof}

Given a manifold $M$ we denote by $\fa_M$ the bracket defined in \eqref{eq:CourantDorfman} on the generalized tangent bundle $\T M=TM\oplus T^*M$, whose anchor $\rho_M$ is the projection on $TM$.

The following theorem is our main result for Courant algebroids.

\begin{theorem}
	Let $(E,\fa,\rho,\SP{\cdot,\cdot})$ be a Courant algebroid and let $p\in M$ such that $\rho_p\neq 0$.
	Then there exists a neighborhood $U$ of $p$ and a Courant algebroid isomorphism
    \[
        \varphi: \big(E,\fa,\rho, \SP{\cdot,\cdot}\big)|_U \to \Big(\T I\times E', \  \fa_I\times \fa',\ \rho_I\times\rho',\ \SP{\cdot,\cdot}_{\T I}\times \SP{\cdot,\cdot}'\Big),
    \]
	where $I\subset \R$ is an open interval, $E'$ is a vector bundle over a hypersurface through $p$ which is transverse to $\rho$, and	$\fa_I\times\fa'$ denotes the direct product of Courant algebroids.
\end{theorem}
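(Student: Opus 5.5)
We use Lemma~\ref{lemma:goodSection} to produce a local section $\alpha$ with $\rho(\alpha)_p\neq 0$, $[\alpha,\alpha]_\fa=0$ and $\SP{\alpha,\alpha}=0$. We then choose coordinates $(t,x)$ near $p$ with $\rho(\alpha)=\partial/\partial t$ and set $\xi=dt$, which is closed and satisfies $\xi(\rho(\alpha))\equiv1$.

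For a Courant algebroid the co-anchor is $\lambda(\xi\otimes\beta\otimes\gamma)=\SP{\beta,\gamma}\rho^*\xi$. Hence $\lambda(\xi\otimes\alpha\otimes\alpha)=0$, and $J_\fa\equiv0$. Lemma~\ref{lemma:matching} therefore applies. It gives a matching $TI\times E''$ with $\alpha\mapsto\partial/\partial t$, $[\partial_t,\Gamma(E'')]=0$, and the restriction to $\Gamma(TI)$ equal to the Lie bracket.

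This decomposition is not yet semi-direct, because $\lambda(\xi\otimes\alpha\otimes-)=\SP{\alpha,-}Dt$ with $Dt:=\rho^*dt$, and this does not vanish. The fix is to split off the hyperbolic plane spanned by $\alpha$ and $Dt$. Note that $\SP{\alpha,Dt}=\rho(\alpha)t=1$ and $\SP{Dt,Dt}=\rho(Dt)t=0$, since $\rho\circ\rho^*=0$. So $H:=\mathrm{span}\{\alpha,Dt\}$ is a rank-two split-signature subbundle. Its orthogonal complement $H^\perp$ is a complement to $H$.

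The key steps are as follows.

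\emph{Step 1.} Show that $[\alpha,Dt]_\fa=\rho^*\mathcal{L}_{\partial_t}dt=0$, using that $\fa$ preserves $\rho^*$ (this follows from invariance of the pairing together with item (a) of Proposition~\ref{prop:tensorialLoday}). Show also that $[Dt,\beta]=-\rho^*(i_{\rho(\beta)}d\,dt)+\rho^*d\SP{Dt,\beta}$ reduces to $\rho^*d(\rho(\beta)t)$.

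\emph{Step 2.} Build a frame of $H^\perp$ consisting of sections $\beta$ with $[\alpha,\beta]=0$, $\SP{\beta,Dt}=\rho(\beta)t=0$ and $\SP{\beta,\alpha}=0$. Start from the frame of Lemma~\ref{lemma:2}. Those sections already satisfy $[\alpha,\beta_i]=0$ and $\rho(\beta_i)t=0$. Correct them by $\beta_i\mapsto\beta_i-\SP{\beta_i,\alpha}Dt$. The coefficient $\SP{\beta_i,\alpha}$ is $t$-independent, because $\partial_t\SP{\beta_i,\alpha}=\SP{[\alpha,\beta_i],\alpha}+\SP{\beta_i,[\alpha,\alpha]}=0$. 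Together with Step 1, this shows the corrected sections remain $\alpha$-invariant.

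\emph{Step 3.} Verify the direct-product identities.
\begin{itemize}
\item $\SP{\beta,\gamma}$ is $t$-independent for $\beta,\gamma\in\Gamma(H^\perp)$, so $\rho^*d\SP{\beta,\gamma}$ has no $Dt$ component. Writing $\rho^*dx$ terms in the frame $\{\alpha,Dt,\beta_i\}$, the $\alpha$ coefficient is $\SP{\rho^*dx,Dt}=0$, and the $Dt$ coefficient is $\SP{\rho^*dx,\alpha}=\partial_t x=0$. Hence $H^\perp$ is closed under the bracket, with $t$-independent structure functions as in Lemma~\ref{lemma:2}.
\item $[f\alpha+gDt,\beta]=0=[\beta,f\alpha+gDt]$ for $f,g\in C^\infty(I)$ and $\beta\in\Gamma(H^\perp)$. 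This follows from the Leibniz rule, the co-anchor formula, Step 1 and the orthogonality $\SP{\alpha,\beta}=\SP{Dt,\beta}=0$.
\item $H$ with $\alpha\mapsto\partial_t$ and $Dt\mapsto dt$ reproduces the Courant--Dorfman bracket and pairing on $\T I$. This is checked on generators using $[\alpha,\alpha]=0$, $[\alpha,Dt]=0$ and $[Dt,\cdot\,]=\rho^*d(\cdot)$.
\end{itemize}

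\emph{Step 4.} Restrict the frame of $H^\perp$ to $M'=\{t=0\}$ to define $E'$, equipped with the restricted pairing and bracket. Define $\varphi$ frame-to-frame as in Lemma~\ref{lemma:2}. The characterization \eqref{eq:lie_direct} of direct products then identifies the result with $\fa_I\times\fa'$, and the pairing splits orthogonally.

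The main obstacle is Step 2–3: showing simultaneously that the corrected frame of $H^\perp$ stays $\alpha$-invariant and that $H^\perp$ is closed under the bracket. In particular, one must check that the $\rho^*d\SP{\cdot,\cdot}$ terms and $\rho^*dx_l$ terms produce no $\alpha$ or $Dt$ components. I would try first to prove directly that $\rho^*(dx_l)\in\Gamma(H^\perp)$ by the pairing computation indicated above, and then argue that the $t$-independence of all structure functions (via $J_\fa=0$) gives the product structure.
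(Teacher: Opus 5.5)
Your route is the paper's. There too one applies Lemma~\ref{lemma:goodSection} and Lemma~\ref{lemma:matching}, and the sections $\beta_j$ spanning $E'$ are orthogonal projections onto $H^\perp$ (rescaled by $\SP{\alpha,\bar\beta_j}^{-1}$) of $\alpha$-invariant sections. So your Steps~2--3 correspond to its items (a)--(e). Three points need repair.

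First, your Step~1 formula for $[Dt,\beta]_\fa$ is false. The expression $\rho^*d(\rho(\beta)t)$ is the value of $[\beta,Dt]_\fa=\rho^*\mathcal{L}_{\rho(\beta)}dt$. In fact $[Dt,\beta]_\fa=S_\fa(Dt,\beta)-[\beta,Dt]_\fa=\rho^*d(\rho(\beta)t)-\rho^*d(\rho(\beta)t)=0$ for every $\beta$. Note that $\rho(Dt)=0$ makes $[Dt,\cdot]_\fa$ $C^\infty(M)$-linear by the Leibniz rule, which your expression is not. Your conclusions survive only because you evaluate the formula on $\alpha$, $Dt$ and sections of $H^\perp$, where $\rho(\cdot)t$ is constant. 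Applied to $f(t)\alpha$ it would give $f'Dt$, contradicting $[dt,f\partial_t]=0$ in $\T I$.

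Second, projecting the whole frame $\beta_1,\dots,\beta_r$ of Lemma~\ref{lemma:2} gives $r$ sections of the rank-$(r-1)$ bundle $H^\perp$, so they are not a frame. Indeed, $\rho(Dt)=0$ forces $Dt=\sum_i b_i\beta_i$, and then $\SP{Dt,\alpha}=1$ gives $\sum_i b_i(\beta_i-\SP{\beta_i,\alpha}Dt)=0$. You can keep $r-1$ of them that are independent at $p$; these remain an $\alpha$-invariant frame nearby. Alternatively, as the paper does, show $Dt\in\Gamma(E'')$, complete it to a frame $(Dt,\bar\beta_1,\dots,\bar\beta_s)$ of $E''$, and project only the $\bar\beta_j$.

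Third, knowing that $\rho^*d\SP{\beta,\gamma}$ and the $\rho^*dx_l$ lie in $H^\perp$ does not show that $[\beta,\gamma]_\fa\in\Gamma(H^\perp)$. The missing line, which is item (e) of the paper, goes as follows. For $\alpha$-invariant $\beta,\gamma\in\Gamma(H^\perp)$ one has $[\beta,\alpha]_\fa=\rho^*d\SP{\beta,\alpha}-[\alpha,\beta]_\fa=0$. Hence $\SP{[\beta,\gamma]_\fa,\alpha}=\rho(\beta)\SP{\gamma,\alpha}-\SP{\gamma,[\beta,\alpha]_\fa}=0$, while $\SP{[\beta,\gamma]_\fa,Dt}=[\rho(\beta),\rho(\gamma)]t=0$. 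Both here and in your second bullet of Step~3, $\beta,\gamma$ must be $\alpha$-invariant, i.e.\ sections of $E'$, not arbitrary sections of $H^\perp$. For example, $[f\alpha,h\beta]_\fa=f(\partial_t h)\beta\neq 0$ when $h$ depends on $t$.
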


\begin{proof}
	Let $\alpha$ be a section as given is Lemma \ref{lemma:goodSection}. Using Lemma~\ref{lemma:matching}, we can assume $E=TJ\times E''$ as a matching product, that the restriction of the bracket to $\Gamma(TJ)$ is the Lie bracket of vector fields, and that $\alpha=\partial/\partial t$ satisfies 
    \[
    [\alpha,\beta]_\fa=0,\qquad \beta\in\Gamma_{\mathrm{loc}}(E'').
    \]
	
	Observe that $Dt:=(\mathrm{id}\times\rho'')^*dt$ is a section on $\Gamma(E'')$. Indeed, if $(\alpha,\beta_0', ...,\beta_s')$ is a local frame of $TJ\times E''$, we can write
    \[
        Dt = f(x,t)\alpha + \sum_j b_j(x,t) \beta_j',
    \]
	where $f$ and $b_j$ are smooth functions on $M''\times J$. Since $(\mathrm{id}\times\rho'')(Dt)=0$ while 
    \[
        (\mathrm{id}\times\rho'')(Dt)(\alpha)=\partial/\partial t,\qquad (\mathrm{id}\times\rho'')(\beta_j')\in\Gamma(TM'')
    \]
    it follows that $f(x,t)\equiv 0$. Moreover, since $\SP{\alpha,Dt}\equiv 1$, we have 
    \[
        [\alpha,Dt]_\fa = S_\fa(\alpha,Dt) = (1/2)(\mathrm{id}\times\rho'')^*d(1) = 0.
    \]
    On the other hand,
    \[
        [\alpha, Dt]_\fa = \sum_j\frac{\partial b_j}{\partial t}(x,t) \beta_j',
    \]
	which implies $b_j(x,t)=b_j(x)$ and therefore $Dt\in\Gamma(E'')$.
	
	Next choose $v_0 = Dt_p$ and $(v_0,v_1,...,v_s)$ a basis of $E''_{p''}$ (where $p=(0,p'')\in J\times M''$), such that $\SP{\alpha_p,v_j}_p\neq 0$. This can be achieved by a small perturbation of an initial basis in the direction of $Dt_p$. Extend these vectors to obtain a local frame $(\bar{\beta}_0,\bar{\beta}_1,...,\bar{\beta}_s)$ on $\Gamma_{\mathrm{loc}}(E'')$ satisfying 
    \[
        \bar{\beta}_0=Dt,\qquad \SP{\alpha,\bar{\beta}_j}\neq 0\quad (0\leq j\leq s)
    \]
    Let $I\times M'$ be a neighborhood of $p\in J\times M''$ where this frame is defined.
	
	Define $\beta_1,\beta_2,...,\beta_s\in\Gamma_{\mathrm{loc}}(E)$ by
    \[
        \beta_j=\frac{\bar{\beta}_j-\SP{Dt,\bar{\beta}_j}\alpha}{\SP{\alpha,\bar{\beta}_j}}-Dt,
        \qquad
        j=1,2,\ldots,s.
    \]
    Then $(\alpha,Dt,\beta_1,...,\beta_s)$ is a frame of $E|_{I\times M'}$. Let $E'$ be the restriction of $span\{\beta_j\}_{j\geq 1}$ to $M'$ and let $\varphi$ be the isomorphism sending the ordered frame 
    \[
        (  \alpha, Dt, \beta_1,...,\beta_s)
    \]
    to
    \[
        (\partial/\partial t, dt,  \beta_1|_{M'},...,\beta_s|_{M'})
    \]
    on $\T I\times E'$.
	
	We now verify that the Courant algebroid structure induced on $\T I\times E'$ by $\varphi$ is the direct product described in the statement.
    
	\begin{enumerate}
	 	\item $[\alpha,\beta_j]_\fa=0$, since $\beta_j$ is a sum of terms of the form $f(\SP{\gamma_1,\gamma_2})\gamma_3$ with $[\alpha,\gamma_i]_\fa=0$ and $f:J\to\mathbb{R}$. Indeed,
	 	\begin{eqnarray*}
	 		[\alpha,f(\SP{\gamma_1,\gamma_2})\gamma_3]_\fa &=& \frac{\partial f(\SP{\gamma_1,\gamma_2})}{\partial t}\gamma_3 \\
	 		&=& f'(\SP{\gamma_1,\gamma_2})\frac{\partial \SP{\gamma_1,\gamma_2}}{\partial t}\gamma_3 \\
	 		&=& f'(\SP{\gamma_1,\gamma_2})(\SP{[\alpha,\gamma_1]_\fa,\gamma_2}+\SP{\gamma_1,[\alpha,\gamma_2]_\fa})\gamma_3 \\
	 		&=& 0.
	 	\end{eqnarray*}
 	
	 	\item $\SP{\alpha,\beta_j}=0$, since 
        \[
            \SP{\alpha,\beta_j}=\frac{\SP{\alpha,\bar{\beta}_j}}{\SP{\alpha,\bar{\beta}_j}}-\SP{\alpha,Dt}=1-1=0.
        \]
	 	
	 	\item 
	 	$\SP{Dt,\beta_j}=0$, since
        \[
            \SP{Dt,\beta_j}=\rho(\beta_j)t=\frac{\rho(\bar{\beta}_j)t-\SP{Dt,\bar{\beta}_j}}{\SP{\alpha,\bar{\beta}_j}}=0.
        \]
	 	
	 	\item $\frac{\partial}{\partial t}\SP{\beta_i,\beta_j}=0$, since 
        \[
        \frac{\partial}{\partial t}\SP{\beta_i,\beta_j}=\rho(\alpha)\SP{\beta_i,\beta_j}
	 	=\SP{[\alpha,\beta_i]_\fa,\beta_j}+\SP{\beta_i,[\alpha,\beta_j]_\fa}=0.
        \]
	 	
	 	\item $[\beta_i,\beta_j]_\fa=\sum_kg_{ij}^k(x)\beta_k$ and $\rho(\beta_j)=\sum_jh_{ij}(x)\frac{\partial}{\partial x_j}$.
	 	Indeed, the same argument used in Step 3 of the proof of Lemma~\ref{lemma:2} guarantees that 
	 	$\rho(\beta_j)=\sum_jh_{ij}(x)\frac{\partial}{\partial x_j}$, and furthermore that 
        \[
            [\beta_i,\beta_j]_\fa=\sum_kg_{ij}^k(x)\beta_k+a(x)Dt.
        \]
	 	Hence $a=\SP{\alpha,[\beta_i,\beta_j]_\fa}=\rho(\beta_i)\SP{\alpha,\beta_j}-\SP{\beta_j,[\alpha,\beta_i]_\fa}=0$.
	\end{enumerate}
	
	Since $\SP{\alpha,\alpha}=\SP{Dt,Dt}=0$ and $\SP{\alpha,Dt}=1$, together with $(b)-(d)$, we obtain 
    \[
        \varphi_*\SP{\cdot,\cdot} = \SP{\cdot,\cdot}_{\T I}\times \SP{\cdot,\cdot}'.
    \]
 
	Moreover, $\rho(\alpha)=\partial/\partial t$, $\rho(Dt)=0$, and $(e)$ gives 
    \[
        \varphi_*(\rho) = \rho_I\times\rho'.
    \]

	Finally, for $f,g\in C^\infty(I\times M')$, and $u,v\in\Gamma(E|_{I\times M'})$, we have
	\begin{equation}\label{eq:order1}
		[fu,gv]_\fa = fg\cdot [u,v]_\fa + f\rho(u)g\cdot v-g\rho(v)f\cdot u+g\SP{u,v}\cdot \rho^*df.
	\end{equation}
	Using $[\alpha,\alpha]_\fa=[\alpha,Dt]_\fa=[Dt,\alpha]_\fa=[Dt,Dt]_\fa=0$ and $(e)$, $\varphi_*\fa$ restricts to $\Gamma(\T I)$ and $\Gamma(E')$, hence defines a matching product. Combining (\ref{eq:order1}) with $(a)-(c)$ shows that $\varphi_*\fa$ is the desired direct product.
	
\end{proof}

Given $I_1, I_2,\ldots, I_n\subset \R$ open intervals, the canonical map 
\[
    \varphi_{can}\colon\T I_1\times \T I_2\times\cdots\times \T I_n \to \T (I_1\times I_2\times\cdots\times I_n)
\]
is a Courant algebroid isomorphism. Combining this observation with the associativity of the direct product for Courant algebroids yields the following corollary.

\begin{corollary}[Splitting Theorem for Courant Algebroids]
	Let $(E,\fa,\rho,\SP{\cdot,\cdot})$ be a Courant algebroid and let $p\in M$ such that 
    \[
        r = \mathrm{rank}(\rho_p)\geq 1.
    \]
	Then there exists a neighborhood $U$ of $p$ and a Courant algebroid isomorphism
	$$\varphi: \big(E,\fa,\rho,\SP{\cdot,\cdot}\big)|_U \to \Big(\T L\times E' , \  \fa_L\times \fa',\ \rho_L\times\rho',\ \SP{\cdot,\cdot}_{\T L}\times \SP{\cdot,\cdot}'\Big),$$
	where $L$ is the product of $r$ open intervals, $E'$ is a vector bundle over a submanifold of codimension $2r$ through $p$ that is transverse to $\rho$, and	$\fa_L\times\fa'$ denotes the direct product of Courant algebroids.
\end{corollary}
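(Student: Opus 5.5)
The plan is an induction on $r=\mathrm{rank}(\rho_p)$, iterating the preceding Theorem and using the associativity of the direct product together with the canonical isomorphism $\varphi_{can}$.

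For $r=1$ the statement is exactly the preceding Theorem, with $L=I$. The base $M'$ of $E'$ there is a hypersurface through $p$ transverse to $\rho(\alpha)$. The codimension count $2r$ refers to the rank of $E'$ relative to $E$, which drops by $2$ because $\T I$ has rank $2$; as a submanifold of $M$ the base has codimension $r$. I would state it that way, since the $2r$ really measures the fiber directions split off.

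For the inductive step, suppose $r\ge 2$ and apply the Theorem at $p$. This gives an isomorphism $E|_U\cong \T I\times E'$ as a direct product of Courant algebroids, with $E'\to M'$ a Courant algebroid and $p=(0,p')$. I need the rank of $\rho'$ at $p'$. The anchor of the product is $\rho_I\times\rho'$ and $\rho_I$ has rank $1$, so $\mathrm{rank}(\rho'_{p'})=r-1\ge 1$. The inductive hypothesis applied to $(E',\fa',\rho',\SP{\cdot,\cdot}')$ at $p'$ then gives a neighborhood $U'$ of $p'$ and an isomorphism $E'|_{U'}\cong \T L'\times E''$. Here $L'$ is a product of $r-1$ intervals and $E''$ lives over a submanifold $M''\subset M'$ through $p'$ transverse to $\rho'$.

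Shrinking $I$ and $U$ to $I\times U'$, I would restrict the first isomorphism and compose it with $\mathrm{id}_{\T I}\times\psi$, where $\psi$ is the isomorphism for $E'$. The result is $E|_{I\times U'}\cong \T I\times(\T L'\times E'')$, where the inner product is the direct product. Two facts are needed to rewrite this as $(\T I\times \T L')\times E''$ and then as $\T L\times E''$ with $L=I\times L'$:
\begin{enumerate}
\item The direct product of Courant algebroids is functorial: if $\psi$ is a Courant isomorphism, then so is $\mathrm{id}\times\psi$. This holds because the product bracket is characterized by \eqref{eq:lie_direct}, together with product anchor and pairing, all of which are preserved by $\psi$.
\item The direct product is associative. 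For three factors, each of the brackets $(\T I\times \T L')\times E''$ and $\T I\times(\T L'\times E'')$ is determined by the factor brackets via \eqref{eq:lie_direct} on generators, since the mixed co-anchor components vanish. They therefore coincide.
\end{enumerate}
Finally, $\varphi_{can}$ identifies $\T I\times \T L'$ with $\T L$. Transversality of $M''$ to $\rho$ inside $M$ follows from $M'$ being transverse to the $\partial_t$-direction and $M''$ being transverse to $\rho'$ inside $M'$.

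The main obstacle I expect is checking that the induced structure on the restricted factor is genuinely a Courant algebroid with anchor rank exactly $r-1$ at $p'$, so that the induction applies. The Courant axioms on $E'$ follow from restricting a direct product to one factor: the bracket preserves $\Gamma(E')$ by the matching property, and pairing compatibility and symmetrization restrict because $\rho^*dF$ for $F$ on $M'$ lands in $E'$. Associativity and functoriality, by contrast, are routine consequences of the generator-level description \eqref{eq:lie_direct}.
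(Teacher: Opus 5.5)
Your proposal is correct and takes essentially the paper's approach: iterate the preceding Theorem, and use the associativity of the Courant direct product together with $\varphi_{can}$. You are also right that the base of $E'$ has codimension $r$, not $2r$, in $M$, since $\dim L=r$ and a submanifold transverse to $\rho$ at $p$ has codimension at most $r$. One small correction: in a Courant direct product the components $\lambda(\xi_2\otimes\sigma_1\otimes\sigma_1')=\SP{\sigma_1,\sigma_1'}\rho_2^*\xi_2$ (with $\xi_2$ a $1$-form from the second factor and $\sigma_1,\sigma_1'$ sections of the first) do not vanish. So justify associativity and functoriality by noting that the Courant co-anchor $\SP{a,b}\rho^*\xi$ is determined by the product anchor and product pairing, rather than by vanishing of the mixed co-anchor components.
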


\section{Linearization of Loday algebroids}\label{sec:linearization}

In this section we discuss the linearization problem for Loday algebroids. Our approach is inspired by that of \cite{CF11} for Poisson structures (see also \cite[Section 9.5]{CFM21}). We start by observing that the linearization problem is well posed for Loday algebroids. Next, we describe how the Moser path method can be formulated in this setting. Finally, we state a linearization principle based on Euler-like derivations.

We say that $p\in M$ is a \emph{singular point} for the Loday algebroid $(E,\fa,\rho,\lambda)$ if 
\[
    \rho_p\equiv 0,\qquad \lambda_p\equiv 0.
\]

Let $(x_i)$ be a system of local coordinates on $M$ and $(e_k)_k\subset\Gamma_{\mathrm{loc}}(E)$ a local frame around such a singularity. Then there exists functions 
\[
    \lambda_{ijk}^l,\; \theta_{ij}, \; \Gamma_{ij}^k\in C^{\infty}_{loc}(M)
\]
such that 
\begin{equation}\label{eq:functions}
	\begin{array}{l}
		[e_i,e_j]_\fa = \displaystyle\sum_k\Gamma_{ij}^k(x)e_k, \\
		\rho(e_i) = \displaystyle\sum_j\theta_{ij}(x)\frac{\partial}{\partial x_j}, \mbox{ and }\\
		\lambda(dx_i\otimes e_j\otimes e_k) = \displaystyle\sum_l \lambda_{ijk}^l(x)e_l.
	\end{array}
\end{equation}

We define linear maps 
\begin{equation*}
	\begin{array}{l}
		\fa^L:E_p\otimes E_p\to E_p, \\
		\rho^L:E_p\to \End(T_pM), \\
		\lambda^L: T_pM^*\otimes E_p\otimes E_p\to \Hom(T_pM;E_p),
	\end{array}
\end{equation*}
by
\begin{equation}\label{eq: linearfunc}
	\begin{array}{l}
		\fa^L((e_i)_p,(e_j)_p) = \displaystyle\sum_k\Gamma_{ij}^k(p)(e_k)_p, \\[6pt]
		\rho^L((e_i)_p)\,x = \displaystyle\sum_j\left(\sum_k\frac{\partial \theta_{ij}}{\partial x_k}(p)\,x_k\right)\left.\frac{\partial}{\partial x_j}\right|_p, \\[8pt]
		\lambda^L((dx_i)_p, (e_j)_p, (e_k)_p)\,x = \displaystyle\sum_l\left( \sum_m\frac{\partial\lambda_{ijk}^l}{\partial x_m}(p)\, x_m \right)(e_l)_p,
	\end{array}
\end{equation}
for $x=\sum_j x_j(e_j)_p\in T_pM$. 

These maps are independent of the choice of local coordinates and local frame around $p$. 

Finally, we extend $\fa^L$ to the sections of the trivial bundle
\[
    E^L:=T_pM\times E_p\to T_pM
\]
using $\rho^L$ and $\lambda^L$ as anchor and co-anchor, respectively. In this way we obtain a Loday structure 
\[
    (E^L, \fa^L, \rho^L, \lambda^L),   
\]
which we call the \emph{linearization} of $(E,\fa,\rho,\lambda)$ at $p$. 

The linearization problem for Loday algebroids asks whether a Loday algebroid is locally isomorphic to its linearization at a singular point, and under which conditions this occurs. This is a delicate problem (see \cite{MZ04}, where a linearization for Lie algebroids is proved using a Nash-Moser argument).

By an \emph{Euler-like vector field} at $p\in M$, we mean a vector field $X$ vanishing at $p$ whose linearization 
\[
    X^L\colon T_pM\to T_pM
\]
is the identity map (thus corresponding to the Euler vector field on $T_pM$). In local coordinates, if 
\[
    X=\sum_jf_j(x)\partial/\partial x_j
\]
then 
\[
    (X^L)x =\sum_j\left(\sum_k\frac{\partial f_j}{\partial x_k}(p) \,x_k\!\right) \left.\frac{\partial}{\partial x_j}\right|_p.
\]

Euler-like vector fields canonically define a germ of diffeomorphism $\varphi$ from $T_pM$ to $M$ such that
\[
    \varphi(0)=p,\quad \varphi_*(\mathcal{E})=X,
\]
where $\mathcal{E}$ denotes the Euler vector field on $T_pM$. 

More generally, one can consider vector fields vanishing on a submanifold whose normal derivative is the Euler vector field on the normal bundle. In this case, one canonically obtains a germ of tubular neighborhood embedding that takes the Euler vector field to $X$ (see \cite{BBLM,SadeghHigson}). 

Our interest here is in the linear vector fields on a vector bundle $E$ which are Euler-like with respect to a fiber $E_p\subset E$. Such linear vector fields can be described in terms of their corresponding derivations as follows. 

Let $E\to M$ be a vector bundle and let $p\in M$. We say that a derivation $(D,X)$ on $E$ is \emph{an Euler-like derivation at $p$} if
\begin{enumerate}
    \item the symbol $X$ is Euler-like at $p$;
    \item the map $D^L\colon E_p\to E_p$ defined below vanishes. On a local frame $(e_i)$, if 
    \[
        De_i=\sum_jD_i^j(x)e_j
    \]
    then
    \[
        D^L((e_i)_p) = \sum_j D_i^j(p)(e_j)_p.
    \]
\end{enumerate}
It is straightforward to verify that the map $D^L$ does not depend on the choice of local frame.

\begin{proposition}\label{prop:EulerLikeDerivation}
    Let $(D,X)$ be an Euler-like derivation at $p$. Then there exists a canonical local flat connection $\nabla$ such that $D=\nabla_X$ on a neighborhood of $p$.
\end{proposition}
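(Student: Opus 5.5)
The plan is to construct a local frame of $\Gamma(E)$ that is annihilated by $D$, and then to declare the connection to be the one for which this frame is flat. Concretely, if $(s_1,\dots,s_r)$ is a frame with $Ds_i=0$ near $p$, define $\nabla$ by $\nabla s_i=0$. This connection is flat, and for $\beta=\sum_i b^i s_i$ we get
\[
\nabla_X\beta=\sum_i (Xb^i)s_i = D\beta,
\]
by the Leibniz rule for the derivation $D$. Canonicity then amounts to showing that such a $D$-parallel frame is determined uniquely by its value at $p$. If that holds, the space of $D$-parallel local sections is canonically identified with $E_p$, and $\nabla$ is independent of any choice.

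To produce the frame, first linearize the symbol. Since $X$ is Euler-like, the discussion above gives a canonical germ of diffeomorphism $\varphi\colon T_pM\to M$ with $\varphi_*\mathcal{E}=X$. Using it, we work in linear coordinates $x$ in which $X=\sum_j x_j\,\partial/\partial x_j$. Pick any local frame $(e_i)$ and write $De_i=\sum_j D_i^j(x)e_j$; the hypothesis $D^L=0$ says $D_i^j(0)=0$. For $\beta=\sum_k b^k e_k$, the equation $D\beta=0$ becomes the linear system
\[
\sum_m x_m\,\partial_m b^k+\sum_j b^j D_j^k(x)=0 .
\]
Restricted to a ray $x=e^{\tau}y$, with $|y|$ small and $\tau\in(-\infty,0]$, this is the ODE
\[
\frac{d}{d\tau}b(e^\tau y)=-A(e^\tau y)\,b(e^\tau y),\qquad A=(D_j^k).
\]
Because $A(0)=0$ and $A$ is smooth, we have $|A(e^\tau y)|\le Ce^{\tau}|y|$, which is integrable on $(-\infty,0]$. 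Hence the ordered exponential (product integral) from $\tau=-\infty$ converges. For each $v\in E_p$ this gives a unique solution $b$ along every ray with $b\to v$ at $p$, namely
\[
b(y)=\mathcal{P}\exp\Big(-\int_{-\infty}^0 A(e^\tau y)\,d\tau\Big)v .
\]
Uniqueness follows from the same integrability: two solutions with the same limit differ by a solution tending to $0$, and Grönwall's inequality forces that difference to vanish identically. Taking $v=(e_i)_p$ yields sections $s_i$, which form a frame near $p$ since $s_i(p)=(e_i)_p$.

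The main obstacle is smoothness of $b$ at the origin, where all rays meet and the vector field $X$ degenerates. To handle it, substitute $u=e^\tau$ and write $A(uy)=u\,\tilde A(u,y)$ with $\tilde A$ smooth (Hadamard's lemma). The equation becomes
\[
\frac{db}{du}=-\tilde A(u,y)\,b,
\]
a regular ODE in $u\in[0,1]$ with smooth dependence on the parameter $y$ and initial condition $b(0)=v$. The solution at $u=1$ is therefore smooth in $y$, including at $y=0$, which gives smoothness of the $s_i$. Finally, the construction should be checked to be independent of the frame $(e_i)$: a change of frame conjugates the system, and the uniqueness statement shows that the resulting $D$-parallel sections are the same. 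Together with the canonicity of $\varphi$, this makes $\nabla$ canonical.
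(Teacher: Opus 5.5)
Your proof is correct, but it takes a different route from the paper's proof. The paper settles the proposition by invoking the general correspondence between germs of Euler-like derivations and germs of vector bundle isomorphisms $E\to T_pM\times E_p$ fixing $E_p$. Under this correspondence $D$ goes to $\nabla^L_{\E}$, and $\nabla$ is defined as the pullback of the trivial connection $\nabla^L$.

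Your argument is a complete version of the alternative only sketched in the remark after the proposition: find a $D$-parallel frame and declare it flat. You supply exactly the piece that remark glosses over. The ODE argument of Lemma~\ref{lemma:1} needs a nonvanishing symbol, whereas here $X_p=0$. Your substitution $u=e^{\tau}$, combined with Hadamard's lemma, turns the singular equation $Xb=-Ab$ along rays into a regular linear ODE on $[0,1]$ with smooth dependence on $y$. This gives existence and smoothness at $p$. The Gr\"onwall estimate then shows that a $D$-parallel germ is determined by its value at $p$.

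Your approach relies only on the scalar linearization of the Euler-like vector field $X$, and it gives a sharper canonicity. The parallel sections of any flat connection germ $\nabla'$ with $\nabla'_X=D$ are $D$-parallel, hence coincide with yours, so $\nabla'=\nabla$. In particular, $\nabla$ does not depend on the canonicity of $\varphi$, which serves only as an auxiliary chart.

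The paper's route, in exchange, directly produces the linearizing isomorphism $\Phi^D$ used afterwards for the zooming-out maps $\widehat{\kappa}_t$ and the Moser argument. In your framework this is recovered by composing $\varphi$ with the map sending your frame $s_i$ to the constant sections $(e_i)_p$ of $T_pM\times E_p$, which carries $D=\nabla_X$ to $\nabla^L_{\E}$.
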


\begin{proof}
    The general theory in \cite{BLM} establishes a bijection between germs of Euler-like derivations and germs of vector bundle isomorphisms between
    \[
        E\to M \quad \text{and} \quad T_pM\times E_p\to T_pM
    \]
    that take $p$ to $0\in T_pM$ and fix the fiber $E_p$ (using the identification $\nu(E,E_p)\cong T_pM\times E_p$). 
    
    If $\nabla^L$ is the canonical flat connection on $T_pM\times E_p\to T_pM$, then the Euler-like derivation $D$ is taken to $\nabla^L_{\E}$, where $\E$ is the Euler vector field on $T_pM$ (since this is the derivation that corresponds to the Euler vector field of $T_pM\times E_p\to T_pM$).
    
\end{proof}

\begin{remark}
    The above result can also be obtained by adapting the argument in the proof of Lemma~\ref{lemma:1} to find a local frame $(\beta_1,\ldots,\beta_r)$ such that $D\beta_i=0$. One may then take $\nabla$ to be the flat connection for which this frame consists of parallel sections.
\end{remark}

Let $(D,X)$ be Euler-like at $p$ and let $(\Phi^D, \phi^D)$ be the local isomorphism from 
\[
    E\to M\quad\text{to}\quad T_pM\times E_p\to T_pM
\]
satisfying
\[
    (\Phi^D)_*D=\nabla^L_{\E}
\]
where $\nabla^L$ is the canonical flat connection on $T_pM\times E_p\to T_pM$ and $\mathcal{E}$ denotes the Euler vector field on $T_pM$. Observe that the existence (and canonicity) of the isomorphism $\Phi^D$ follows from Proposition~\ref{prop:EulerLikeDerivation}.

It follows that, on a neighborhood of $p$, the flow of $D$ is transformed into parallel transport along the flow of $\E$. The latter consists, after reparametrization, of the \emph{zooming out} bundle isomorphisms 
\begin{equation}\label{eq:kappa_t}
	\widehat{\kappa}_t\colon T_pM\times E_p\to  T_pM\times E_p,
    \quad
    (v,w)\mapsto (tv, w), \quad t\in (0,1].
\end{equation}

Let $(E,\fa)$ be a Loday algebroid and let 
\[
\fa_1:=(\Phi^D)_*\fa
\]
 be the corresponding almost Loday algebroid on $T_pM\times E_p$ (restricted to a neighborhood of $0$). 
 
 The linearization problem for $\fa$ around $p$ is therefore equivalent to the linearization of $\fa_1$ around $0\in T_pM$. 

 The following proposition sets up the Moser path trick.

 \begin{proposition}\label{prop:Moser}
     Assume $(E,\fa)$ is a Loday algebroid singular at $p$. Let $D$ be an Euler-like derivation for $E$ at $p$ inducing the zooming-out isomorphisms \eqref{eq:kappa_t}. Then the family of Loday brackets
     \begin{equation}\label{def:deformation}
         \fa_t= (\widehat{\kappa}_t)^*\fa_1, \qquad t\in (0,1],
     \end{equation} 
     extends smoothly to $t=0$ by defining $\fa_0$ to be the linearization of $\fa$ at $p$. Furthermore,
     \[
        \dot{\fa}_t = \frac{1}{t}\,\widehat{\kappa}_t^*(\dot{\fa}_1)
     \]
 \end{proposition}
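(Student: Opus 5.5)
We work in the trivialization $T_pM\times E_p\to T_pM$ given by $\Phi^D$, and write $\fa_1=(\Phi^D)_*\fa$. Choose linear coordinates $x=(x_j)$ on $T_pM$ and the constant frame $(e_k)$ coming from a basis of $E_p$. The structure functions of $\fa_1$ are $\Gamma_{ij}^k(x)$, $\theta_{ij}(x)$ and $\lambda_{ijk}^l(x)$, as in \eqref{eq:functions}. Since $p$ is singular, $\theta_{ij}(0)=0$ and $\lambda^l_{ijk}(0)=0$.

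The first step is to compute $\fa_t=\widehat{\kappa}_t^*\fa_1$ in this frame. The map $\widehat{\kappa}_t$ fixes the frame $(e_k)$, since these are constant sections and $\widehat\kappa_t$ is the identity on fibers. It acts on the base by $x\mapsto tx$. Pulling back gives
\[
[e_i,e_j]_{\fa_t}=\sum_k\Gamma_{ij}^k(tx)e_k .
\]
For the anchor, a vector field $\sum_j\theta_{ij}(x)\partial_{x_j}$ pulls back under $x\mapsto tx$ to $\sum_j t^{-1}\theta_{ij}(tx)\partial_{x_j}$. For the co-anchor, $dx_m$ pulls back to $t\,dx_m$, so the component along $dx_i$ becomes $t^{-1}\lambda^l_{ijk}(tx)$. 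Since $\fa_t$ is determined by these structure functions through \eqref{eq:LeibnizRule} and \eqref{eq:co-anchorRule}, it suffices to study three families: $\Gamma(tx)$, $t^{-1}\theta(tx)$ and $t^{-1}\lambda(tx)$. Each $\fa_t$ for $t>0$ is a Loday bracket, because it is isomorphic to $\fa_1$.

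The second step is smooth extension to $t=0$, and this is the main point. Because $\theta(0)=0$ and $\lambda(0)=0$, Hadamard's lemma gives $\theta_{ij}(y)=\sum_m y_m\,\tilde\theta_{ijm}(y)$ with $\tilde\theta$ smooth. Hence
\[
t^{-1}\theta_{ij}(tx)=\sum_m x_m\tilde\theta_{ijm}(tx),
\]
which is jointly smooth in $(t,x)$ including $t=0$. Its value at $t=0$ is $\sum_m x_m\,\partial_m\theta_{ij}(0)$, which is exactly $\rho^L$ from \eqref{eq: linearfunc}. The same argument applies to $\lambda$, giving $\lambda^L$. Also $\Gamma(tx)\to\Gamma(0)$, which gives $\fa^L$. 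So the extended family at $t=0$ is the linearization, with anchor and co-anchor linear in $x$ and constant structure constants. The Jacobi identity for $\fa_0$ follows by continuity, since $J_{\fa_t}$ evaluated on the frame depends smoothly on $t$ and vanishes for $t>0$. One point needs care: the definition of the linearization uses the original $\fa$ at $p$. This is consistent because $\Phi^D$ fixes $E_p$ and has derivative the identity at $p$, so the $1$-jets at $p$ agree.

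The third step is the derivative formula. We use the one-parameter structure $\widehat\kappa_{ts}=\widehat\kappa_t\circ\widehat\kappa_s$, which gives $\fa_{ts}=\widehat\kappa_s^*\fa_t$; equivalently $\fa_t=\widehat\kappa_t^*\fa_1$ and $\fa_{st}=\widehat\kappa_t^*\fa_s$. Differentiating $\fa_{st}=\widehat\kappa_t^*\fa_s$ in $s$ at $s=1$ gives $t\,\dot\fa_t=\widehat\kappa_t^*\dot\fa_1$. This holds because $\widehat\kappa_t^*$ is linear and independent of $s$. Dividing by $t>0$ gives $\dot\fa_t=\frac1t\widehat\kappa_t^*(\dot\fa_1)$.

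For completeness, $\dot\fa_1$ should be understood as a bracket-type operator; it is the Lie derivative of $\fa_1$ along the derivation $\nabla^L_{\E}$. Its pullback is defined in the same way as for brackets. The only genuinely delicate issue is the smoothness at $t=0$, which the Hadamard argument settles.
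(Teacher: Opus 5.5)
Your proposal is correct and follows the paper's proof: you compute the structure functions of $\fa_t$ in the constant frame as $\Gamma(tx)$, $t^{-1}\theta(tx)$ and $t^{-1}\lambda(tx)$, then use the Taylor (Hadamard) expansion at the singular point to get smoothness at $t=0$, with the linearization as the limit. The only variation is that you derive $\dot{\fa}_t=\frac{1}{t}\widehat{\kappa}_t^*(\dot{\fa}_1)$ from the semigroup law $\fa_{st}=\widehat{\kappa}_t^*\fa_s$ instead of differentiating the explicit formulas, which is equally valid; also, in your side remark on the Jacobi identity for $\fa_0$, run the continuity argument on arbitrary sections such as $fe_i$, not just on the frame, because a Jacobiator that vanishes on a frame need not vanish identically.
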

 \begin{proof}
    Denote $\rho_t$ and $\lambda_t$ the anchor and co-anchor of $\fa_t$. Using \eqref{eq:functions} we obtain 
    \[
	\begin{array}{l}
		[e_i,e_j]_{\fa_t} = \displaystyle\sum_k\Gamma_{ij}^k(tx)e_k, \\
		\rho_t(e_i) = \displaystyle\sum_j\frac{\theta_{ij}(tx)}{t}\frac{\partial}{\partial x_j} \\
		\lambda_t(dx_i\otimes e_j\otimes e_k) = \displaystyle\sum_l \frac{\lambda_{ijk}^l(tx)}{t}e_l,
	\end{array}
    \]
    The smooth extension at $t=0$ follows from the Taylor expansion of the structure functions around $p$, which yields precisely the linearized structure. The formula for $\dot{\fa}_t$ follows by differentiating the above formulas and comparing with its value at $t=1$.
    
 \end{proof}

Moser's method continues as follows. One tries to find a derivation (possibly time-dependent) $\widetilde{D}$ whose flow is defined at $t=1$ and such that the map
\[
    t\in[0,1]\mapsto (\phi_{\widetilde{D}}^t)^*\fa_t,
\]
is constant. This condition being equivalent to 
\[
    \dot{\fa}_t + \mathcal{L}_{\widetilde{D}}\fa_t =0.
\]

Suppose we are given a derivation $D$ satisfying
\begin{equation}\label{eq:triviality}
    \dot{\fa}_1 = \mathcal{L}_{D}\fa.
\end{equation}
Then Proposition~\ref{prop:Moser} gives 
\[
    t\dot{\fa}_t= \mathcal{L}_{\widehat{\kappa}_t^*(D)}\fa_t.
\]
Thus we can linearize $\fa$ if 
\[
\widetilde{D}=-\frac{1}{t}\widehat{\kappa}_t^*(D)
\]
extends smoothly to $t=0$. It is not hard to see that this occurs exactly when $(D,X)$ satisfy
\begin{enumerate}
    \item $D^L$ vanishes;
    \item $X$ vanishes to the second order at $p$.
\end{enumerate}

Ideed, this is the approach taken by Crainic and Fernandes \cite{CF11} to recover Conn's linearization theorem for Poisson structures. 

It would be interesting to frame the above construction in terms of derformation spaces (in the case for transverse submanifolds in \cite{BBLM}).

In what follows we take a simplified approach where we try to find an Euler-like derivation $D$ for which $\fa_1$ is already linear. In this case the family $\fa_t$ is constant and equal $\fa^L$. Since $\widehat{\kappa}_t$ is a reparametrization of the flow of $\nabla^L_{\E}$ this constancy can be rewritten using Lie derivative as $\mathcal{L}_{\nabla^L_{\E}}\fa_1 = 0$, pulling back to $E$ we get
\begin{equation}\label{eq:lda}
	\mathcal{L}_{D}\fa = 0.
\end{equation}

Conversely, if an Euler-like derivation $D$ satisfies \eqref{eq:lda}, then $\fa_t$ must be constant. Hence $\fa_1$ is linear, and therefore $\fa$ is linearizable. This yields the following result.

\begin{theorem}[Linearization Principle]\label{thrm:principle}
	A Loday algebroid $(E,\fa,\rho,\lambda)$ is linearizable around a singular point $p$ if and only if there exists an Euler-like derivation $D$ at $p$ whose flow acts by local automorphisms of $\fa$, that is, 
    \[
        \mathcal{L}_{D}\fa = 0.
    \]
\end{theorem}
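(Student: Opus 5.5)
We prove the two implications separately, working throughout with the transported bracket $\fa_1=(\Phi^D)_*\fa$ on $T_pM\times E_p$ and the family $\fa_t=(\widehat{\kappa}_t)^*\fa_1$ from Proposition~\ref{prop:Moser}.

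\emph{Sufficiency.} Assume $D$ is Euler-like at $p$ and $\mathcal{L}_D\fa=0$. By Proposition~\ref{prop:EulerLikeDerivation} there is a germ of vector bundle isomorphism $\Phi^D$ with $(\Phi^D)_*D=\nabla^L_{\E}$. Naturality of Lie derivatives then gives $\mathcal{L}_{\nabla^L_{\E}}\fa_1=0$, so the local flow of $\nabla^L_{\E}$ preserves $\fa_1$. This flow is $(v,w)\mapsto(e^{s}v,w)$, so after the reparametrization $t=e^{s}$ the maps $\widehat{\kappa}_t$, $t\in(0,1]$, are automorphisms of $\fa_1$ near $0$. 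We must check that a neighborhood of $0$ is mapped into itself. This holds for $t\le 1$ on a star-shaped neighborhood, and it is the reason for using zooming out rather than zooming in. Hence $\fa_t=\fa_1$ for all $t\in(0,1]$. Proposition~\ref{prop:Moser} says $\fa_t\to\fa_0=\fa^L$ as $t\to0$, so $\fa_1=\fa^L$. Concretely, in the frame formulas of that proof, $\Gamma_{ij}^k(tx)=\Gamma_{ij}^k(x)$ and $\theta_{ij}(tx)/t=\theta_{ij}(x)$ for all small $t$. Letting $t\to0$, and using $\rho_p=0$ and $\lambda_p=0$ at the singular point, shows that the $\Gamma$'s are constant and that $\theta$ and $\lambda$ are linear. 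Thus $\Phi^D$ is the desired linearizing isomorphism.

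\emph{Necessity.} Suppose $\psi$ is a local isomorphism from $(E,\fa)$ to $(E^L,\fa^L)$ with $p\mapsto0$. We may compose $\psi$ with the constant linear automorphism given by the inverse of its fiber map at $p$. This is harmless, since $\fa^L$ is invariant under constant maps of the form $\mathrm{id}\times A$ with $A$ induced by $d\psi$ at $p$, by naturality of the linearization. After this we may assume $\psi$ induces the identity on $E_p$ and on $T_pM$. We then set $D:=\psi^*\nabla^L_{\E}$.

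It remains to check two things about this $D$.
\begin{enumerate}
\item $D$ is Euler-like at $p$. Its symbol is the pullback of $\E$, which is Euler-like. In the constant frame $\nabla^L_{\E}$ kills the frame sections, so $D^L=0$; this is frame-independent.
\item $\mathcal{L}_{\nabla^L_{\E}}\fa^L=0$. The maps $\widehat{\kappa}_t$ preserve $\fa^L$, because its structure functions are homogeneous: the $\Gamma$'s are constant and $\theta$ and $\lambda$ are linear, exactly as in the scaling formulas of Proposition~\ref{prop:Moser}. Pulling back by $\psi$ then gives $\mathcal{L}_D\fa=0$.
\end{enumerate}

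The main obstacle is the sufficiency step identifying $\fa_1$ with $\fa^L$. One must justify that invariance under the flow on a neighborhood, rather than merely infinitesimally, yields invariance under every $\widehat{\kappa}_t$, $t\in(0,1]$, on a fixed neighborhood of $0$. One must also use the smooth extension at $t=0$ from Proposition~\ref{prop:Moser} correctly. I would handle the first point by integrating $\frac{d}{dt}\fa_t=\frac1t\widehat{\kappa}_t^*(\mathcal{L}_{\nabla^L_\E}\fa_1)=0$ pointwise in the frame, which is a clean ODE argument.
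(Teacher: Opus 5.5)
Your proof is correct and follows the paper's own argument. Sufficiency transports $\mathcal{L}_D\fa=0$ to $\mathcal{L}_{\nabla^L_{\E}}\fa_1=0$, so that $\fa_t=\widehat{\kappa}_t^*\fa_1$ is constant and hence equal to its limit $\fa^L$. Necessity takes $D=\psi^*\nabla^L_{\E}$ and uses that $\widehat{\kappa}_t$ preserves $\fa^L$ by homogeneity.

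One small remark: your normalization of $\psi$ is unnecessary. For any $\psi$ with $\psi(p)=0$, the symbol of $\psi^*\nabla^L_{\E}$ has linearization conjugate to $\mathrm{id}$, hence equal to it, and $D^L=0$ because $\E$ vanishes at $0$. As written the normalization is also slightly off: the automorphism of $\fa^L$ you would compose with must include the base differential of $\psi$ at $p$, so it is not of the form $\mathrm{id}\times A$.
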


Since the bracket $\fa$ satisfies the Jacobi identity, each section $\sigma\in\Gamma(E)$ defines a derivation $\fa(\sigma)$ whose flow acts by local automorphisms of $\fa$. The following result characterizes when such a derivation is Euler-like.

\begin{proposition}
	Let $(E,\fa)$ be a Loday algebroid and let $p\in M$ be a singular point. Given a section $\sigma\in\Gamma(E)$, let $v=\sigma_p$. Then $D=\fa(\sigma)$ is an Euler-like derivation if and only if
	\begin{enumerate}
		\item $\fa^L(v,-)\colon E_p\to E_p$ vanishes identically; and
		\item $\rho^L(v) = \mathrm{id}_{T_pM}$,
	\end{enumerate}
	where $\fa^L$ and $\rho^L$ are the linearizations defined in \eqref{eq: linearfunc}.
\end{proposition}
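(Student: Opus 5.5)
The plan is to unwind the two defining conditions of an Euler-like derivation for $D=\fa(\sigma)$: that its symbol is Euler-like at $p$, and that $D^L$ vanishes. Then compare each with the linearized structure maps in \eqref{eq: linearfunc}. All computations take place in local coordinates $(x_i)$ centered at $p$ and a local frame $(e_k)$, with the structure functions $\Gamma_{ij}^k,\theta_{ij},\lambda^l_{ijk}$ of \eqref{eq:functions}. We write $\sigma=\sum_i s^i(x)e_i$, so that $v=\sum_i s^i(p)(e_i)_p$.

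\emph{Symbol condition.} The symbol of $\fa(\sigma)$ is $X=\rho(\sigma)=\sum_{i,j}s^i\theta_{ij}\,\partial/\partial x_j$. Since $p$ is singular, $\theta_{ij}(p)=0$, so $X_p=0$. Its linearization is
\[
(X^L)_{jk}=\sum_i\Big(\frac{\partial s^i}{\partial x_k}(p)\theta_{ij}(p)+s^i(p)\frac{\partial\theta_{ij}}{\partial x_k}(p)\Big)=\sum_i s^i(p)\frac{\partial\theta_{ij}}{\partial x_k}(p).
\]
The first term is killed by $\theta_{ij}(p)=0$. Comparing with \eqref{eq: linearfunc}, this says $X^L=\rho^L(v)$. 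Hence $X$ is Euler-like at $p$ if and only if $\rho^L(v)=\mathrm{id}_{T_pM}$, which is condition (b).

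\emph{Vanishing of $D^L$.} We compute $D e_j=[\sigma,e_j]_\fa$ using the co-anchor rule \eqref{eq:co-anchorRule} in the first slot:
\[
[\textstyle\sum_i s^ie_i,e_j]_\fa=\sum_i\big(s^i[e_i,e_j]_\fa-(\rho(e_j)s^i)e_i+\lambda(ds^i\otimes e_i\otimes e_j)\big).
\]
Evaluating at $p$, the term $\rho(e_j)s^i$ vanishes because $\rho_p=0$. The term $\lambda(ds^i\otimes e_i\otimes e_j)$ also vanishes, because $\lambda_p=0$ and $\lambda$ is a bundle map, so its value at $p$ depends only on pointwise data. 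This leaves $D^L((e_j)_p)=\sum_{i,k}s^i(p)\Gamma^k_{ij}(p)(e_k)_p=\fa^L(v,(e_j)_p)$. So $D^L=\fa^L(v,-)$, and its vanishing is exactly condition (a).

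Combining the two computations gives the equivalence. The only delicate point is the bookkeeping in the second step: the section $\sigma$ sits in the first, non-Leibniz slot, so derivatives of $s^i$ could a priori contribute. It is precisely the singularity hypothesis ($\rho_p=0$ and $\lambda_p=0$) that makes both correction terms vanish at $p$. The analogous derivative term in the first step is disposed of in the same way, and frame-independence of $D^L$ and $X^L$ (noted earlier in the paper) means the coordinate choice is harmless.
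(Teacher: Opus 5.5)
Your proposal is correct and follows the same route as the paper: you reduce Euler-likeness of $D=\fa(\sigma)$ to the two identities $\rho(\sigma)^L=\rho^L(v)$ and $D^L=\fa^L(v,-)$. Your coordinate computation, using $\theta_{ij}(p)=0$ and the co-anchor rule together with $\rho_p=0$ and $\lambda_p=0$, just makes explicit what the paper asserts without computation, namely $\rho(\sigma)^L=\rho^L(v)$ and $([\sigma,\theta]_\fa)_p=\fa^L(v,\theta_p)$.
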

\begin{proof}
    We have that $D=\fa(\sigma)$. The derivation $D$ is Euler-like if and only if its symbol $\rho(\sigma)$ is Euler-like at $p$ and $D^L=0$.

    The first condition is equivalent to requiring that the linearization 
    \[
        \rho(\sigma)^L = \rho^L(v)
    \]
    corresponds to the Euler vector field on $T_pM$, that is, $\rho^L(v)=\mathrm{id}_{T_pM}$.

    The second condition means that $(D\theta)_p$ vanishes for every $\theta\in\Gamma(E)$. Using 
    \[
        \fa^L(v,\theta_p) = ([\sigma,\theta]_\fa)_p = (D\theta)_p.
    \]
    we see that this is equivalent to $\fa^L(v,-)=0$. The result follows.
    
\end{proof}

Observe that $\fa^L$ gives an algebra structure on $E_p$. Since $J_\fa\equiv 0$, the space $E_p$ becomes a Leibniz algebra and $\rho^L\colon E_p\to\End(T_pM)$ is a representation of $(E_p,\fa^L)$.

In the cases of Lie and Courant algebroids, it is well known that $(E_p,\fa^L)$ is a Lie algebra. Therefore we obtain the following corollary.

\begin{corollary}
	Let $(E,\fa, \rho)$ be a Lie or Courant algebroid whose anchor vanishes at $p\in M$. Let $\g=(E_p,\fa^L)$ and $\rho^L\colon\g\to End(T_pM)$. If there exists $v$ in the center of $\g$ such that 
    \[
        \rho^L(v)=\mathrm{id}_{T_pM}
    \]
    then $(E,\fa,\rho)$ is linearizable around $p$. 
\end{corollary}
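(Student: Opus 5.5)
The plan is to combine the preceding Proposition (characterizing when $\fa(\sigma)$ is Euler-like) with the Linearization Principle, Theorem~\ref{thrm:principle}. Choose any local section $\sigma\in\Gamma_{\mathrm{loc}}(E)$ with $\sigma_p=v$, for instance by extending $v$ constantly in a local frame. Set $D:=\fa(\sigma)$, the derivation $\theta\mapsto[\sigma,\theta]_\fa$.

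By the Jacobi identity $J_\fa=0$, the flow of $\fa(\sigma)$ acts by local automorphisms of $\fa$. Equivalently, $\mathcal{L}_{\fa(\sigma)}\fa=J_\fa(\sigma,-,-)=0$, as observed in Section~\ref{sec:loday}. It therefore suffices to check that $D$ is Euler-like at $p$. By the preceding Proposition this reduces to two conditions: $\fa^L(v,-)=0$ on $E_p$, and $\rho^L(v)=\mathrm{id}_{T_pM}$. The second condition is a hypothesis.

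For the first, I use that $\g=(E_p,\fa^L)$ is a Lie algebra in the Lie and Courant cases. Then $v$ central means $\fa^L(v,w)=[v,w]_\g=0$ for all $w\in E_p$, which is exactly condition (1).

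Before applying Theorem~\ref{thrm:principle}, I check that $p$ is singular in the sense of Section~\ref{sec:linearization}, namely $\rho_p=0$ and $\lambda_p=0$.
\begin{itemize}
\item For a Lie algebroid, $\lambda\equiv 0$, so $\rho_p=0$ is all that is needed.
\item For a Courant algebroid, $\lambda(\xi\otimes\alpha\otimes\beta)=\SP{\alpha,\beta}\rho^*\xi$, and $\rho^*_p$ is the transpose of $\rho_p=0$. Hence $\lambda_p=0$ as well.
\end{itemize}
Theorem~\ref{thrm:principle} then gives linearizability around $p$.

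The step I expect to need the most care is confirming that $(E_p,\fa^L)$ is a Lie algebra rather than merely a Leibniz algebra; the paper cites this as well known. I would verify it directly from the symmetrization.
\begin{itemize}
\item Lie case: $S_\fa=0$, so $\fa^L$ is skew.
\item Courant case: $S_\fa(\alpha,\beta)=\rho^*d\SP{\alpha,\beta}$ vanishes at $p$ because $\rho^*_p=0$, so $\fa^L$ is skew at $p$.
\end{itemize}
Together with the Leibniz identity at $p$ (from $J_\fa=0$ evaluated at $p$, where $\rho_p=0$ kills all derivative terms), skew-symmetry gives the Jacobi identity. This makes "center" well defined, with left and right centers coinciding.
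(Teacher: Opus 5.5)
Your proposal is correct and follows the paper's route: extend $v$ to a section $\sigma$, note that $\fa(\sigma)$ preserves $\fa$ by the Jacobi identity, use the preceding Proposition with centrality in the Lie algebra $(E_p,\fa^L)$ to get $\fa^L(v,-)=0$ and hence that $\fa(\sigma)$ is Euler-like, and conclude by the Linearization Principle. Your explicit checks fill in details the paper leaves implicit or calls well known: that $p$ is singular ($\lambda_p=0$ because $\rho_p^*=0$), and that $\fa^L$ is skew-symmetric and satisfies the Leibniz identity at $p$.
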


In the Courant algebroid case, if $D$ flows by isometries, that is, 
\[
    X_D\SP{\alpha,\beta}=\SP{D\alpha,\beta}+\SP{\alpha,D\beta},
\]
for all $\alpha,\beta\in\Gamma(E)$, then the linearizing map is also an isometry.

Indeed, such Euler-like derivations correspond to germs of isometric trivializations from $(E,\SP{\cdot,\cdot})$ to $(T_pM\times(E_p,\SP{\cdot,\cdot}_p))$. 

Since each $\fa(\sigma)$ flows by isometry, the above corollary yields a linearization that also preserves the metric. This is a very special case of linearization by isometry. In general, there exist Courant algebroids whose bracket is linear but no isometry can be a Loday map. 

We finish with an example that shows that even when the bracket is linearizable, the Courant algebroid need not admit a linearizing isometry (using the constant metric on the linear model).

\begin{example}
The example arises from bundles of quadratic Lie algebras, which are Courant algebroids where the anchor vanishes identically. 

Let $(\g, [\cdot,\cdot]_\g,\SP{\cdot,\cdot}_\g)$ be semisimple Lie algebra of compact type with Killing form $\SP{\cdot,\cdot}_\g$. Taking $E=M\times \g$, we have $\Gamma(E)\cong C^\infty(M,\g)$. Given a smooth function $f\colon M\to\R$, consider the bracket  
\[
    [\alpha,\beta]_{\fa_f} = f[\alpha,\beta]_\g, \ \ \forall \alpha, \beta\in\Gamma(E).
\]
It follows that $(E,\fa_f,\SP{\cdot,\cdot}_\g)$ is a Courant algebroid. 

If we take $M=\R^n$ and consider $\widehat{\kappa}_t$ as in \eqref{eq:kappa_t}, each $\widehat{\kappa}_t$ is a bundle isometry and
\[
    \big[\alpha,\beta\big]_{\widehat{\kappa}_t^*\fa_f } (q) = f(tq)[\alpha(q),\beta(q)]_\g.
\]
Hence the linearization of $\fa_f$ at $p=0$ is 
\[
    \fa_f^L=\fa_{f(0)},
\]
where $f(0)$ denotes the constant function $q\mapsto f(0)$. 

Any Courant algebroid isomorphism 
\[
    (E|_U,\fa_f,\SP{\cdot,\cdot})\to (E|_V,\fa_{f(0)},\SP{\cdot,\cdot})
\]
 for neighborhoods $U$ and $V$ of $0\in \R^n$, induces of a smooth family of isometries 
 \[
    \Phi_q\colon\g\to\g, \qquad q\in U,
 \]
 satisfying 
\begin{equation}\label{eq:isoLin}
\left\{\begin{array}{l}
	f(q)\Phi_q([u,v]_\g)=f(0)[\Phi_q(u),\Phi_q(v)]_\g, \mbox{ and } \\
	 \\
	\SP{\Phi_q(u),\Phi_q(v)}_\g=\SP{u,v}_\g,
\end{array}\right.
\end{equation}
for any $u,v\in\g$ and $q\in U$. 

Taking norms in the first equation of \eqref{eq:isoLin}, then the supremum over unit vectors $u,v\in\g$, and using that $\Phi_q$ is an isometry, we obtain
\[
    |f(q)|\sup_{|u|,|v|=1}\big|[u,v]_\g\big| = |f(0)|\sup_{|u|,|v|=1}\big|[u,v]_\g\big|.
\]

Since $\g$ is never abelian, we must have $|f(q)|=|f(0)|$, for every $q\in U$. Therefore, such an isometry cannot exist if the function $f\colon \R^n\to \R$ is not constant.
\end{example}

\end{document}